  \newtheorem{theorem}{Theorem}
  \newtheorem{proposition}[theorem]{Proposition}
  \newtheorem{lemma}[theorem]{Lemma}
  \newtheorem{corollary}[theorem]{Corollary}
  \newtheorem{definition}[theorem]{Definition}
  \newtheorem{example}[theorem]{Example}
  \newtheorem{remark}[theorem]{Remark}
\newcommand{\newword}[1]{\textbf{\emph{#1}}}
\newcommand*{\defeq}{\stackrel{\text{def}}{=}}		
\def\A{\mathcal{A}}
\def\id{\mathrm{id}}
\def\coveredby{\lhd}		
\begin{document}

\title{Palindromic intervals in Bruhat order and hyperplane arrangements}

\author{Robert Mcalmon, Suho Oh and Hwanchul Yoo}
\maketitle

\begin{abstract}
An element $w$ of the Weyl group is called rationally smooth if the corresponding Schubert variety is rationally smooth. This happens exactly when the lower interval $[id,w]$ in the Bruhat order is palindromic. For each element $w$ of the Weyl group, we construct a certain hyperplane arrangement. After analyzing the palindromic intervals inside the maximal quotients, we use this result to show that the generating function for regions of the arrangement coincides with the Poincar\'e polynomial of the corresponding Schubert variety if and only if the Schubert variety is rationally smooth.

\end{abstract}

\section{Introduction}

For an element of a Weyl group $w\in W$, let 
$P_w(q) \defeq \sum_{u\leq w} q^{\ell(u)}$, where the sum is over
all elements $u\in W$ below $w$ in the (strong) Bruhat order.
Geometrically, the polynomial $P_w(q)$ is the Poincar\'e polynomial
of the Schubert variety $X_w = BwB/B$ 
in the flag manifold $G/B$.

The inversion hyperplane arrangement $\A_w$ is defined as the collection
of hyperplanes corresponding to all inversions of $w$.  Let $R_w(q) \defeq \sum_r q^{d(r_0,r)}$ 
be the generating function that counts regions $r$ of the arrangement $\A_w$
according to the distance $d(r_0,r)$ from the fixed initial region $r_0$.

The main result of the paper is analyzing the palindromic lower intervals of $W$ and using that to show $P_w(q)=R_w(q)$ if and 
only if the Schubert variety $X_w$ is rationally smooth. We have previously given an elementary combinatorial proof for Type A case of this problem in \cite{OPY08}.

According to the criterion of Carrell and Peterson \cite{C94},
the Schubert variety $X_w$ is rationally smooth if and only if the Poincar\'e polynomial
$P_w(q)$ is palindromic, that is $P_w(q) = q^{\ell(w)} \, P_w(q^{-1})$.
If $w$ is not rationally smooth then the polynomial $P_w(q)$ is not palindromic, 
but the polynomial $R_w(q)$ is always palindromic.  So $P_w(q)\ne R_w(q)$
in this case.  Hence it is enough to show that $P_w(q)=R_w(q)$ when $w$ is rationally smooth. Our proof is purely combinatorial, combining basics of Weyl groups with a result from \cite{BP05}.

This paper is an extended and improved version of the conference paper \cite{OY} back in 2010, with a heavier focus on palindromic lower intervals in parabolic quotients. The previous conjecture of \cite{OY} regarding shapes of palindromic lower intervals will be given a proof using lattice $M(n)$ studied in \cite{St} and \cite{BB05}. This result will be used to prove our main result instead of the original approach, giving us a more uniform proof.

\section*{Acknowledgement}
We would like to thank Alexander Postnikov, Sara Billey, Axel Hultman, Eric Sommers, William Slofstra and Lucas Rusnak for useful remarks and discussions.

\medskip

\section{Rational smoothness of Schubert varieties and Inversion hyperplane arrangement}

In this section we will explain how rational smoothness can be expressed by conditions on the lower Bruhat interval. We will also define the inversion hyperplane arrangement. In this paper, unless stated otherwise, we refer to the strong Bruhat order.

Let $G$ be a semisimple simply-connected complex Lie group, $B$ a Borel subgroup and $\mathfrak{h}$ the corresponding Cartan subalgebra. Let $W$ be the corresponding Weyl group, $\Delta \subset \mathfrak{h}^{*}$ be the set of roots and $\Pi \subset \Delta$ be the set of simple roots. The choice of simple roots determines the set of positive roots. We will write $\alpha > 0$ for $\alpha \in \Delta$ being a positive root. Following the conventions of \cite{BB05}, let $S$ be the set of simple reflections and $T \defeq \{wsw^{-1}:s \in S, w \in W\}$ be the set of reflections. Set $\Pi = \{\alpha_1,\cdots,\alpha_n\}$, $S = \{s_1,\cdots,s_n\}$ and index them properly so that $s_i$ and $\alpha_i$ corresponds to the same node of the Dynkin diagram for $1 \leq i \leq n$. Then there is a bijection between $T$ and $\Delta$ by matching $ws_iw^{-1}$ with $w(\alpha_i)$. Then $ws_iw^{-1}$ is exactly the reflection that reflects by the hyperplane corresponding to the root $w(\alpha_i)$. 

We have the following definitions as in \cite{BB05}:
$$T_L(w) \defeq \{ t \in T : \ell(tw)<\ell(w) \},$$
$$T_R(w) \defeq \{ t \in T : \ell(wt)<\ell(w) \},$$
$$D_L(w) \defeq T_L(w) \cap S,$$ 
$$D_R(w) \defeq T_R(w) \cap S.$$
They are called the left(right) \newword{associated reflections} of $w$ and left(right) \newword{descent set} of $w$. In this paper, we concentrate on lower Bruhat intervals $[\id, w] \defeq \{u\in S_n \mid u\leq w\}$. They are related to \newword{Schubert varieties} $X_w = \overline{BwB/B}$ inside the generalized flag manifold $G/B$. The \newword{Poincar\'e polynomial} of the Schubert variety $X_w$ is the rank generating function for the interval $[\id, w]$,
e.g., see \cite{BL00}:
$$
P_w(q) = \sum_{u\leq w} q^{\ell(u)}.
$$

For convenience, we will say that $P_w(q)$ is the Poincar\'e polynomial of $w$. And we will say that $w$ is rationally smooth if $X_w$ is rationally smooth. Due to Carrell and Peterson, one can check whether the rational locus of a Schubert variety is smooth or not by studying $P_w(q)$. Let us denote a polynomial
$f(q)=a_0 + a_1\, q + \cdots + a_d\, q^d$ 
as \newword{palindromic} if $f(q) = q^d f(q^{-1})$, i.e.,
$a_i = a_{d-i}$ for $i=0,\dots,d$.

\begin{theorem}
\label{th:Peterson_Criterion}
{\rm (Carrell-Peterson~\cite{C94}, see also \cite[Sect.~6.2]{BL00})} 
For any element of a Weyl group $w \in W$, the Schubert variety $X_w$
is rationally smooth if and only if the Poincar\'e polynomial $P_w(q)$
is palindromic.
\end{theorem}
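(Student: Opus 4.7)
The statement is the classical Carrell--Peterson criterion, a deep result. My plan is to prove the two implications via rather different machinery, with the bulk of the work concentrated in the converse.

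For the forward direction, I would use Poincar\'e duality. The Schubert variety $X_w$ admits a cellular decomposition by Schubert cells $BuB/B \cong \mathbb{C}^{\ell(u)}$ indexed by $u \leq w$ in Bruhat order. Since every cell has even real dimension, the rational cohomology is concentrated in even degrees and $\dim H^{2i}(X_w;\mathbb{Q})$ equals the number of $u \leq w$ with $\ell(u)=i$. Thus the rational Poincar\'e polynomial of $X_w$ as a topological space is exactly $P_w(q^2)$. If $X_w$ is rationally smooth, then its rational cohomology satisfies Poincar\'e duality in complex dimension $\ell(w)$, i.e., $\dim H^{2i} = \dim H^{2(\ell(w)-i)}$. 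This is precisely the palindromic identity $P_w(q) = q^{\ell(w)} P_w(q^{-1})$.

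For the converse, I would pass to Kazhdan--Lusztig theory. Two ingredients are needed. First, local rational smoothness at a torus-fixed point $y \leq w$ is detected by the Kazhdan--Lusztig polynomial: $X_w$ is rationally smooth at $y$ if and only if $P_{y,w}(q) = 1$. Second, a torus-action argument, tracing $T$-stable curves through $\id$ and using the attracting cell structure, shows that $X_w$ is rationally smooth everywhere iff it is rationally smooth at $\id$. So the task reduces to showing: if $P_w$ is palindromic, then $P_{\id,w}(q) = 1$, and in fact all $P_{u,w}(q) = 1$ for $u \leq w$. One proves this by expressing $P_w$ in terms of the KL data on the interval $[\id,w]$, observing that any non-constant KL polynomial $P_{u,w}$ injects a strictly positive, asymmetric contribution into $P_w$ that cannot be cancelled, and then propagating palindromicity inductively down the interval.

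The main obstacle is the converse. The forward direction is essentially formal once $P_w(q^2)$ is identified with the cohomological Poincar\'e polynomial and rational Poincar\'e duality is invoked. The converse, by contrast, is where the real input lies: one needs either the Kazhdan--Lusztig interpretation of local intersection cohomology or Carrell and Peterson's $T$-stable curve analysis, together with the nontrivial reduction from ``rationally smooth everywhere'' to ``rationally smooth at $\id$.'' Since the authors quote this theorem from \cite{C94} and only use the palindromic characterization as a black box, I would in practice cite it rather than reproduce the proof, but the sketch above indicates the structure of the argument I would carry out.
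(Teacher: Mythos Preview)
The paper does not prove this theorem; it is stated with attribution to Carrell--Peterson \cite{C94} and to \cite{BL00} and used as a black box throughout. You correctly anticipated this in your final paragraph, so your proposal to cite rather than reprove matches the paper exactly.

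As for the sketch itself: the forward direction is essentially complete and standard. In the converse, your outline is along one of the known routes, but the sentence ``any non-constant KL polynomial $P_{u,w}$ injects a strictly positive, asymmetric contribution into $P_w$ that cannot be cancelled'' hides the genuine content. One needs the nonnegativity of KL coefficients (itself a deep geometric input) together with the comparison between $P_w(q)$ and the intersection-cohomology Poincar\'e polynomial, which is always palindromic; the inequality between them, combined with matching top and bottom coefficients, then forces all $P_{u,w}=1$. An alternative, closer to Carrell--Peterson's original argument, avoids KL theory and instead counts $T$-stable curves: one shows that palindromicity of $P_w$ forces, for every $u\le w$, the number of reflections $t$ with $u<tu\le w$ to equal $\ell(w)-\ell(u)$, which is the curve-counting criterion for rational smoothness. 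Either way, your instinct that the converse is the substantive half is correct.
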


For each $w \in W$, we will be comparing this polynomial $P_w(q)$ with another polynomial, that comes from an associated hyperplane arrangement. To assign a hyperplane arrangement to each $w \in W$, we first need the definition of the inversion set of $w$. The inversion set $\Delta_w$ of $w$ is defined as the following:
$$\Delta_w \defeq \{ \alpha | \alpha \in \Delta, \alpha > 0, w(\alpha) <0 \}.$$

For type A case, this gives the usual definition of an inversion set for permutations. Let us define the \newword{inversion hyperplane arrangement} $\A_w$ as the collection of hyperplanes $\alpha(x)=0$ for all roots $\alpha \in \Delta_w$. Here all the hyperplanes coming from reflections are \newword{central}, meaning that they contain the origin.

Let $r_0$ be the \newword{fundamental chamber} of $\A_w$, the chamber that contains the points satisfying $\alpha(x)>0$ for all $\alpha \in \Delta_w$. Then we can define the \newword{distance enumerating polynomial} on $\A_w$:
$$R_w(q) \defeq \sum_r q^{d(r_0,r)},$$ where the sum
is over all chambers of the arrangement $\A_w$
and $d(r_0,r)$ is the number of hyperplanes separating
$r_0$ and $r$. Our goal in this paper is to show that $R_w(q) = P_w(q)$ whenever $P_w(q)$ is palindromic.

\begin{remark}
We have $P_w(q) = P_{w^{-1}}(q)$ and $R_w(q) = R_{w^{-1}}(q)$ by definition. Whenever we use this fact, we will call this the \newword{duality} of $P_w(q)$ and $R_w(q)$.
\end{remark}

\section{Parabolic Decomposition}

In this section, we introduce a theorem of \cite{BP05} regarding parabolic decomposition that will serve as a key tool in our proof. Let us first recall the definition of the parabolic decomposition. Given a Weyl group $W$, fix a subset $J$ of simple roots. Denote $W_J$ to be the parabolic subgroup generated by simple reflections of $J$. Let $^JW$ be the set of minimal length (right) coset representatives of $W_J \backslash W$. Then it is a well-known fact that every $w \in W$ has a unique parabolic decomposition $w=uv$ where $u \in W_J, v \in {^JW}$ and $\ell(w) = \ell(u) + \ell(v)$.

\begin{lemma}\cite{H74}
For any $w \in W$ and subset $J$ of simple roots, $W_J$ has a unique maximal element below $w$.
\end{lemma}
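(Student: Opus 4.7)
The plan is to prove existence (uniqueness of a maximum is automatic) by induction on $\ell(w)$, with trivial base case $w = \id$. For the inductive step, pick any right descent $s \in D_R(w)$ and set $w' = ws$; the element $m' := \max(W_J \cap [\id, w'])$ then exists by the inductive hypothesis. The key auxiliary fact I use throughout is that $D_R(x) \subseteq J$ for every $x \in W_J$, since the inversions of such an $x$ all lie in the root subsystem generated by $\{\alpha_s : s \in J\}$, and so any simple right descent of $x$ must belong to $J$.

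I then split into two cases based on whether $s \in J$. If $s \notin J$, then no $x \in W_J \cap [\id, w]$ has $s$ as a right descent, so the lifting property (Bj\"orner--Brenti, Prop.~2.2.7) forces $x \leq w'$; hence $x \leq m'$, and $m'$ is the desired maximum. If $s \in J$, I claim the maximum is the Demazure-type product $M := m' * s$, equal to $m's$ when $m's > m'$ and to $m'$ otherwise. The containment $M \in W_J \cap [\id, w]$ is routine: $M \in W_J$ because $s \in J$, and $M \leq w$ follows from $m' < w$ together with the lifting property. For any $x \in W_J \cap [\id, w]$, if $s \notin D_R(x)$ then lifting gives $x \leq w'$, so $x \leq m' \leq M$; if instead $s \in D_R(x)$, a first application of lifting yields $xs \leq w'$ and hence $xs \leq m'$, and a second application of lifting, now to the strict inequality $xs < M$ with $s \in D_R(M)$, produces $x = (xs)\cdot s \leq M$.

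The main obstacle is the case $s \in J$: the maximum there is not $m'$ itself but rather the Demazure lift $m' * s$, and verifying this requires chaining two applications of the lifting property (once to descend from $w$ to $w'$, and once to lift from $m'$ up to $M$). A subsidiary technical point is checking that $xs < M$ is strict before the second use of lifting: in both subcases, $xs = M$ would combine with $s \in D_R(x)$ to contradict the descent status of $s$ on $M$, so strictness comes for free.
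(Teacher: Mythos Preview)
Your argument is correct. The induction on $\ell(w)$ together with the lifting property handles both cases cleanly; the only delicate point is the second subcase of $s\in J$, and your verification that $s\in D_R(M)$ (in either branch of the Demazure product) and that $xs\neq M$ (since $s\notin D_R(xs)$ while $s\in D_R(M)$) closes that gap correctly.

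As for comparison: the paper does not supply its own proof of this lemma. It is quoted as a known result from \cite{H74} (van den Hombergh) and used as a black box to define $m(w,J)$. So there is no ``paper's approach'' to compare against; you have simply provided a self-contained proof where the paper is content to cite one. Your route via the lifting property is the standard modern argument and is entirely in keeping with the Bj\"orner--Brenti framework the paper already invokes elsewhere.
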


We will denote the maximal element of $W_J$ below $w$ as $m(w,J)$.

\begin{theorem}
\label{thm:Pfactor}
\cite{BP05}
Let $J$ be any subset of simple roots. Assume $w \in W$ has parabolic decomposition $w=uv$ with $u \in W_J$, $v \in {^JW}$ and $u = m(w,J)$. Then
$$P_w(t) = P_u(t) P_v^{^JW}(t)$$
where $P_v^{^JW} = \sum_{z \in ^JW, z \leq v} t^{\ell(z)}$ is the Poincar\'e polynomial for $v$ in the quotient.
\end{theorem}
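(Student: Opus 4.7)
The plan is to set up a length-preserving bijection
$$[\id,w] \;\longleftrightarrow\; \{(a,b) : a \in W_J,\; a \le u,\; b \in {^JW},\; b \le v\},$$
from which the factorization falls out by summing $t^{\ell(a)+\ell(b)}$. The map sends $x \le w$ to the pair $(a,b)$ from its unique parabolic decomposition $x=ab$; the inverse sends $(a,b)$ to $ab$. Since $a\in W_J$ and $b\in {^JW}$, one automatically has $\ell(ab)=\ell(a)+\ell(b)$, so the bijection is length-preserving and yields
$$P_w(t) \;=\; \sum_{a\le u}\;\sum_{\substack{b \in {^JW}\\ b\le v}} t^{\ell(a)+\ell(b)} \;=\; P_u(t)\,P_v^{^JW}(t).$$

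The real content is verifying that $x\le w$ is equivalent to the conjunction $a\le u$ \emph{and} $b\le v$. For the forward direction, assume $x=ab\le w$. Concatenating reduced words for $a$ and $b$ produces a reduced word for $x$ whose initial segment spells out $a$, so the subword property gives $a\le x\le w$. Since $a\in W_J$ and $u=m(w,J)$ is by definition the unique maximum of $W_J\cap[\id,w]$, we conclude $a\le u$. For the other coordinate I would invoke the standard fact (e.g.\ Prop.~2.5.1 of \cite{BB05}) that the canonical projection $\pi^J:W\twoheadrightarrow {^JW}$ stripping off the $W_J$-factor is order-preserving on Bruhat order; applied to $x\le w$ this gives $b=\pi^J(x)\le \pi^J(w)=v$.

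For the reverse direction, suppose $a\le u$ and $b\le v$. Concatenate reduced expressions for $u$ and $v$ to produce a reduced expression for $w$ (legitimate because $\ell(w)=\ell(u)+\ell(v)$). The subword property realises $a$ as a subexpression inside the reduced word for $u$ and $b$ as a subexpression inside the reduced word for $v$; concatenating the two subexpressions gives a subexpression of the reduced word for $w$ that, by length considerations, is a reduced expression for $ab=x$. Hence $x\le w$.

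The main obstacle is the single step $a\le u$ in the forward direction: this is the only place where the hypothesis $u=m(w,J)$ is used, and everything else is bookkeeping with the subword property and with $\pi^J$. One should keep in mind that without this hypothesis the factorization genuinely fails -- in general the maximum of $W_J\cap[\id,w]$ need not arise as a parabolic factor of $w$, and then the first-coordinate projection of $[\id,w]$ is not a product interval. So the theorem is really a statement about how strong the BP05 hypothesis $u=m(w,J)$ is: once it is granted, the combinatorics of reduced words and of the quotient projection $\pi^J$ handle the rest.
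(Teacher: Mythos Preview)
The paper does not supply its own proof of this theorem: it is quoted verbatim from \cite{BP05} and used as a black box, so there is nothing in the text to compare your argument against line by line.

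That said, your argument is correct and is essentially the standard one. The two directions are exactly as you describe: the projection $\pi^J$ being Bruhat-order-preserving gives $b\le v$, the maximality hypothesis $u=m(w,J)$ gives $a\le u$, and the converse is pure subword bookkeeping using $\ell(uv)=\ell(u)+\ell(v)$. Your remark that the hypothesis $u=m(w,J)$ is invoked only once, at the step $a\le u$, is accurate and is the right way to isolate the content of the theorem. One cosmetic point: in the reverse direction you do not actually need the concatenated subword $\underline{a}\,\underline{b}$ to be reduced to conclude $ab\le w$; any subexpression of a reduced word for $w$ that multiplies out to $ab$ already suffices, since the deletion condition lets you pare it down to a reduced one. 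You happen to get reducedness for free here because $a\in W_J$ and $b\in{^JW}$, but the argument does not hinge on it.
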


This decomposition is very useful in the sense that it allows us to factor the Poincar\'e polynomials. We will say that $J = S \setminus \{\alpha\}$ is \newword{leaf-removed} if $\alpha$ corresponds to a leaf in the Dynkin diagram of $S$.

The following theorem of \cite{BP05} tells us that we only need to look at maximal leaf-removed parabolic subgroups for our purpose.

\begin{theorem}\cite{BP05}\label{thm:BP}
Let $w \in W$ be a rationally smooth element. Then there exists a maximal proper subset $J= S \setminus \{\alpha\}$ of simple roots, such that 
\begin{enumerate}
\item we have a decomposition of $w$ or $w^{-1}$ as in Theorem~\ref{thm:Pfactor},
\item $\alpha$ corresponds to a leaf in the Dynkin diagram of $W$. 
\end{enumerate}
\end{theorem}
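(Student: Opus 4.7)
The plan is to combine the Carrell--Peterson palindromicity criterion (Theorem~\ref{th:Peterson_Criterion}) with an analysis of the parabolic decomposition at a leaf of the Dynkin diagram, proceeding by induction on the rank $|S|$. I would first reduce to the case where the support of $w$ equals $S$: the lower Bruhat interval $[\id,w]$ lies entirely in $W_{\mathrm{supp}(w)}$, so rational smoothness descends to the parabolic on the support, and induction applies if $\mathrm{supp}(w) \neq S$. Since every finite type Dynkin diagram is a tree, either $|S| = 1$ (where the statement is trivial) or the diagram has at least one leaf $\alpha$ to use as our candidate.

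For each leaf $\alpha$, set $J = S \setminus \{\alpha\}$ and write the parabolic decomposition $w = u_\alpha v_\alpha$ with $u_\alpha \in W_J$, $v_\alpha \in {}^J W$. By Theorem~\ref{thm:Pfactor}, if $u_\alpha = m(w,J)$ then
$$P_w(q) = P_{u_\alpha}(q) \cdot P_{v_\alpha}^{{}^J W}(q),$$
so the task reduces to producing, possibly after replacing $w$ by $w^{-1}$, a leaf $\alpha$ for which $u_\alpha = m(w,J)$. Toward this, stratify $[\id,w]$ by the $W_J$-part of each element:
$$P_w(q) = \sum_{z \in W_J,\ z \leq m(w,J)} q^{\ell(z)} Q_z(q),$$
where $Q_z(q)$ enumerates lengths of minimal coset representatives $y \in {}^J W$ with $zy \leq w$. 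The condition $u_\alpha = m(w,J)$ is equivalent to the $Q_z(q)$ all coinciding with $P_{v_\alpha}^{{}^J W}(q)$.

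The main obstacle is exhibiting the required leaf. My approach is to use palindromicity as a rigidity condition: the symmetry $P_w(q) = q^{\ell(w)} P_w(q^{-1})$, applied to the stratified decomposition, gives a system of identities on the coefficients of the $Q_z(q)$ which, together with induction (palindromicity of $P_{u_\alpha}(q)$ when $u_\alpha = m(w,J)$ is itself rationally smooth in $W_J$), forces their equality precisely when $\alpha$ is a leaf adjacent to a well-chosen descent of $w$ --- or of $w^{-1}$, which is why both must be considered. The tree structure of the Dynkin diagram is essential here: removing a leaf keeps $J$ connected, so induction on $|S|$ closes once existence at a leaf is secured. I would verify the base cases in ranks $\leq 2$ directly (types $A_2$, $B_2$, $G_2$) to anchor the induction.
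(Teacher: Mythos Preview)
The paper does not prove this theorem at all: it is quoted from \cite{BP05} and used as a black box. So there is no ``paper's own proof'' to compare against; your proposal must be judged on its own merits.

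On those merits, the proposal has a genuine gap. Everything up to and including the stratification
\[
P_w(q)=\sum_{z\in W_J,\ z\le m(w,J)} q^{\ell(z)}\,Q_z(q)
\]
is correct and standard, and you correctly identify that the factorization in Theorem~\ref{thm:Pfactor} amounts to all $Q_z$ coinciding with $P_{v_\alpha}^{{}^J W}$. But the entire content of Theorem~\ref{thm:BP} is precisely the existence of a leaf $\alpha$ (for $w$ or $w^{-1}$) at which this coincidence holds, and here your argument reduces to the sentence ``palindromicity \dots\ gives a system of identities on the coefficients of the $Q_z(q)$ which \dots\ forces their equality precisely when $\alpha$ is a leaf adjacent to a well-chosen descent.'' No mechanism is given: you do not say which identities, which leaf, which descent, or why the switch to $w^{-1}$ ever helps. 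Palindromicity of $P_w$ is a single symmetry constraint on a sum of many $Q_z$; it does not, for a generic choice of leaf, force the $Q_z$ to be equal, so some further input is required to single out the correct leaf --- and that input is exactly what the proof in \cite{BP05} supplies (via the Carrell--Peterson reflection count $|\{t\in T:t\le w\}|=\ell(w)$ and an analysis of descents).

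A secondary issue: your inductive step presumes that $u_\alpha=m(w,J)$ is rationally smooth in $W_J$, but this is a consequence of the factorization you are trying to establish (it needs both factors palindromic, and palindromicity of $P_{v_\alpha}^{{}^J W}$ is not free), so invoking it inside the argument is circular as written.
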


We will call the parabolic decompositions that satisfies the conditions of the above theorem as \newword{BP-decompositions}. This is a strong tool and has been lead to numerous interesting results on Schubert varieties \cite{AR},\cite{KOO},\cite{Richmond2014}, \cite{Richmond2016},\cite{SLOF},.

\section{Palindromic lower intervals in $W^J$}

In this section, we will describe the palindromic lower intervals of $^JW$, where $J = S \setminus \{\alpha\}$ is leaf-removed as before. Due to the obvious symmetry, we will work with left quotient $W^J$ instead since lot of the tools in the previous literature is stated for $W^J$ instead. We say that $v$ is \newword{trivial} if $v=id$ or $v$ is the longest element of $W^J$. If the lower interval $[id,v]$ in $W^J$ is a chain poset, we will say that $v$ is a \newword{chain element} of $W^J$. Whenever $[id,v]$ is rank-symmetric, we will say that the lower interval of $v$ is palindromic or simply say that $v$ is palindromic.

Let $I$ be the set of simple roots that appear in a reduced word of $v$. We say that $v$ is a \newword{locally-longest} element in $W^J$ if it is the longest element of $W_I^{I \cap J}$ and $I$ forms a connected subgraph within the Dynkin diagram. Such quotients of form $W_I^{I \cap J}$ will be referred as \newword{embedded quotients}. Similarly we will say that $v$ is in a \newword{local chain} if $W_I^{I \cap J}$ is a chain poset. In this section, we will show that a palindromic element $v$ is either a locally-longest element or is in a local chain except for two cases of $v$.

\begin{remark}
In \cite{SLOF}, the main tool for decomposing the polynomials was studying the chain elements (called chain BP-decomposition), which is when $v$ is a chain element. In this paper, our decomposition focuses on dealing with the locally-longest elements instead.
\end{remark}

\subsection{type A,F and G}

\begin{example}[$A_n$]
Choose $A_n$ to be our choice of Weyl group and label the simple roots $S = \{s_1,\cdots,s_n\}$ so that the labels match the corresponding nodes in the Dynkin diagram of Figure~\ref{dia:An}. If we set $J = S \setminus \{s_1\}$ then the list of nontrivial palindromic $v \in W^J$ are:
$$s_1,s_2s_1,\ldots,s_{n-1}\cdots s_1.$$

They are all locally-longest elements. Same happens for $J = S \setminus \{s_n\}$ as well.

\end{example}

\begin{figure}[h!]
\centering\begin{tikzpicture}
\draw[line width=1pt]
	(0,0)--(2,0)
	(3,0)--(5,0);
\draw[loosely dotted, thick]
	(2,0)--(3,0);
\draw[fill=white, line width=1pt]
	(0,0) circle[radius=1.8mm]
	(1,0) circle[radius=1.8mm]
	(4,0) circle[radius=1.8mm]
	(5,0) circle[radius=1.8mm];
\node at (0,-.5) {$s_1$};
\node at (1,-.5) {$s_2$};
\node at (4,-.5) {$s_{n-1}$};
\node at (5,-.5) {$s_n$};
\end{tikzpicture}
\caption{Type $A_n$ Dynkin diagram}\label{dia:An}
\end{figure}

\begin{example}[$F_4$]
\label{ex:F4}
Choose $F_4$ to be our choice of Weyl group and label the simple roots $S = \{s_1,\cdots,s_4\}$ so that the labels match the corresponding nodes in the Dynkin diagram of Figure~\ref{dia:F4}. If we set $J = S \setminus \{s_4\}$, then the list of nontrivial palindromic $v \in W^J$ are:
$$s_4, s_3 s_4, s_2 s_3 s_4, s_1 s_2 s_3 s_4, s_3 s_2 s_3 s_4, s_4 s_3 s_2 s_3 s_4.$$


The elements that are not locally-longest are $s_2 s_3 s_4, s_3 s_2 s_3 s_4$ and  $s_1 s_2 s_3 s_4$. The elements $s_2s_3s_4$ and $s_3s_2s_3s_4$ are contained in the same local chain. By symmetry, $J = S \setminus \{s_1\}$ case is similar.

\end{example}

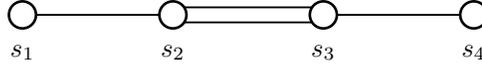
\begin{figure}[h!]
\centering\begin{tikzpicture}
\draw[line width=0.7pt]
	(0,0)--(2,0);
\draw[line width=0.7pt]
	(4,0)--(6,0);

\draw[line width=0.7pt]
	(2,0.1)--(4,0.1);
\draw[line width=0.7pt]
	(2,-0.1)--(4,-0.1);

\draw[fill=white, line width=1pt]
	(0,0) circle[radius=1.8mm]
	(2,0) circle[radius=1.8mm]
	(4,0) circle[radius=1.8mm]
	(6,0) circle[radius=1.8mm];
\node at (0,-.5) {$s_1$};
\node at (2,-.5) {$s_2$};
\node at (4,-.5) {$s_3$};
\node at (6,-.5) {$s_4$};
\end{tikzpicture}
\caption{Type $F_4$ Dynkin diagram}\label{dia:F4}
\end{figure}

\begin{example}[$G_2$]
Choose $G_2$ to be our choice of Weyl group and label the simple roots $S = \{s_1,s_2\}$ so that the labels match the corresponding nodes in the Dynkin diagram of Figure~\ref{dia:G2}. If we set $J = S \setminus \{s_1\}$, then the list of nontrivial palindromic $v \in W^J$ are:
$$s_1,s_2s_1,s_1s_2s_1,\ldots.$$
Only $s_1$ is a locally-longest element here. But $W^J$ is a chain itself so we can say all other elements are in a local chain. Same can be said for $J = S \setminus \{s_2\}$.

\end{example}

\begin{figure}[h!]
\centering\begin{tikzpicture}

\draw[line width=0.7pt]
	(2,0.1)--(4,0.1);
\draw[line width=0.7pt]
	(2,0.0)--(4,0.0);

\draw[line width=0.7pt]
	(2,-0.1)--(4,-0.1);

\draw[fill=white, line width=1pt]
	(2,0) circle[radius=1.8mm]
	(4,0) circle[radius=1.8mm];
\node at (2,-.5) {$s_1$};
\node at (4,-.5) {$s_2$};
\end{tikzpicture}
\caption{Type $G_2$ Dynkin diagram}\label{dia:G2}
\end{figure}
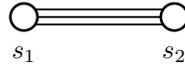

\subsection{type B and D}

Define $M(n)$ to be the set of subsets of $[n] \defeq \{1,\ldots,n\}$ endowed with the partial ordering $\preceq$ defined as follows:
let $A,B\subseteq[n]$.
Write $A=\{a_1<\dots<a_j\}$ and $B=\{b_1<\dots<b_k\}$.
Then $A\preceq B$ denotes that $j\leq k$ and $a_{j-i}\leq b_{k-i}$ for every $0\leq i\leq j-1$.

\begin{figure}[h!]
\centering
\begin{tikzpicture}
\node (e) at (0,0) {$\varnothing$};
\node (s0) at (0,1) {$1$};
\node (s10) at (0,2) {$2$};
\node (s010) at (-1,3) {$12$};
\node (s210) at (1,3) {$3$};
\node (s2010) at (0,4) {$13$};
\node (s3210) at (2,4) {$4$};
\node (s12010) at (-1,5) {$23$};
\node (s32010) at (1,5) {$14$};
\node (s012010) at (-2,6) {$123$};
\node (s312010) at (0,6) {$24$};
\node (s3012010) at (-1,7) {$124$};
\node (s2312010) at (1,7) {$34$};
\node (s23012010) at (0,8) {$134$};
\node (s123012010) at (0,9) {$234$};
\node (s0123012010) at (0,10) {$1234$};
\draw (e) -- (s0) -- (s10) -- (s010) -- (s2010) -- (s12010) -- (s012010) -- (s3012010) -- (s23012010) -- (s123012010) -- (s0123012010);
\draw (s10) -- (s210) -- (s3210) -- (s32010) -- (s312010) -- (s2312010) -- (s23012010);
\draw (s210) -- (s2010) -- (s32010);
\draw (s12010) -- (s312010) -- (s3012010);
\end{tikzpicture}
\caption{The lattice $M(4)\cong B_4/A_3$}
\label{F:m4}
\end{figure}
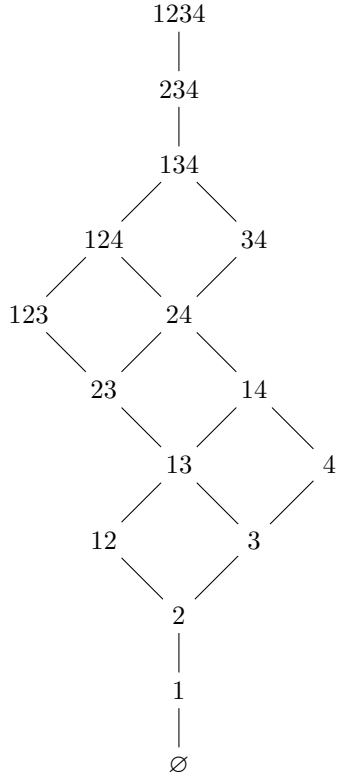

\begin{proposition}
\label{prop:mnposet}
The palindromic elements of $M(n)$ are $k$ and $[k]$ for positive integers $k \leq n$ together with $\emptyset$.

\end{proposition}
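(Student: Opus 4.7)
The plan is to control the interval $[\emptyset, A]$ in $M(n)$ by comparing low- and high-rank coefficients, using the fact (visible in the $M(4)$ diagram and compatible with $M(n)\cong B_{n}/A_{n-1}$) that the rank function is $\mathrm{rank}(A)=\sum_{a\in A}a$. For the positive direction, each of the three families is directly palindromic: the interval below a singleton $\{k\}$ is, from the definition of $\preceq$, the chain $\emptyset\prec\{1\}\prec\cdots\prec\{k\}$, and the interval below $[k]$ is exactly the Boolean lattice $2^{[k]}$ with rank-generating function $\prod_{i=1}^{k}(1+q^{i})$, palindromic via the involution $B\mapsto[k]\setminus B$.

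For the converse, I would exploit that ranks $0$, $1$, $2$ in $M(n)$ each contain a single element ($\emptyset$, $\{1\}$, $\{2\}$), and both $\{1\}$ and $\{2\}$ lie below every $A$ with $|A|\geq 2$. Palindromicity therefore forces both the rank-$(\mathrm{rank}(A)-1)$ and rank-$(\mathrm{rank}(A)-2)$ coefficients of the rank-generating polynomial of $[\emptyset,A]$ to equal one. The key structural step is a characterization of the covers of $A=\{a_{1}<\cdots<a_{j}\}$ from below: these are $A\setminus\{1\}$ (when $a_{1}=1$) together with each $A\setminus\{a_{i}\}\cup\{a_{i}-1\}$ (when $i=1$ and $a_{1}\geq 2$, or $i\geq 2$ and $a_{i}>a_{i-1}+1$). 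Summing gives a cover count of $1+\#\{i\geq 2:a_{i}>a_{i-1}+1\}$, which equals $1$ exactly when $A$ is a consecutive block. Hence any non-consecutive $A$ with $|A|\geq 2$ is already non-palindromic at rank $\mathrm{rank}(A)-1$.

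It remains to handle consecutive $A=\{a_{1},a_{1}+1,\ldots,a_{1}+j-1\}$ with $a_{1}\geq 2$ and $j\geq 2$. For these I would exhibit two distinct elements of $[\emptyset,A]$ at rank $\mathrm{rank}(A)-2$: the element $B_{1}=\{a_{1}-1,a_{2}-1,a_{3},\ldots,a_{j}\}$ always works, and it can be paired with $A\setminus\{a_{1}\}$ when $a_{1}=2$, or with $\{a_{1}-2,a_{2},\ldots,a_{j}\}$ when $a_{1}\geq 3$. In each case the two candidates are readily checked to be $\preceq A$ and of the correct rank, giving the desired contradiction with palindromicity. The main obstacle I foresee is the cover-relation characterization, specifically ruling out size-$(j-1)$ covers when $a_{1}\geq 2$; this follows from the inequality $\sum_{l}b_{l}\leq\sum_{l}a_{l+1}=\mathrm{rank}(A)-a_{1}$ valid for every $B\preceq A$ with $|B|=j-1$, which forces the rank gap in that size regime to be at least $a_{1}\geq 2$ rather than $1$. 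Once this bookkeeping is pinned down, the low-rank/high-rank comparison closes out the argument.
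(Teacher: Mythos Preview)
Your approach is essentially identical to the paper's: both exploit that the bottom three ranks of $M(n)$ are singletons, so palindromicity forces $A$ to have a unique lower cover (hence to be a consecutive block) and then a unique element two ranks below (ruling out blocks not starting at $1$); your write-up simply fills in more of the bookkeeping explicitly. One small caution: the interval $[\emptyset,[k]]$ is $M(k)$, whose underlying set is $2^{[k]}$ but which is \emph{not} the Boolean lattice under $\preceq$ --- nevertheless your rank-generating function $\prod_{i=1}^{k}(1+q^{i})$ and the rank-reversing bijection $B\mapsto[k]\setminus B$ are correct, so the palindromicity conclusion for $[k]$ stands.
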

\begin{proof}
For $A \in M(n)$, define
\begin{eqnarray}\label{eq:updown}
U(A) \defeq \{B\in M(n) \colon A\coveredby B\},\\
U^2(A) \defeq \{B \in M(n) \colon \exists C,  A \coveredby C \coveredby B \},
\end{eqnarray}
where $A \coveredby B$ stands for the covering relation: $A \preceq B$ and there is no other $C$ such that $A \preceq C \preceq B$. Sets $D(A)$ and $D^2(A)$ are defined similarly for checking elements below $A$ instead. It is fairly simple to verify that $[\emptyset,k]$ and $[\emptyset,[k]]$ are palindromic.

Assume for the sake of contradiction that there is a subset $A\subset[n]$ such that $[\varnothing,A]$ is a palindromic interval, and so that $A\neq k,[k]$ for any $k\leq n$.
We take $n\geq 3$, since $M(2)\cong[3]$. Examine the bottom ranks of $M(n)$, we get $|U(\varnothing)| = |U^2(\varnothing)| = 1$. For $A$ to be a palindromic element, we need to have $|D(A)| = |D^2(A)| = 1$ as well. Since $|D(A)| = 1$, then $A$ must be a succession of positive integers $A = \{j,j+1,\dots,k-1,k\}$ for some $1<j<k\leq n$. Since $A = \{j,\dots,k\}$ with $j>1$, $D(A) = \{\{j-1,j+1,\dots,k\}\}$ and we get $|D^2(A)| >1$. This contradicts the assumption that $[\varnothing,A]$ is palindromic.

\end{proof}

\begin{example}
\label{ex:m4}\rm
Look at the poset $M(4)=[\varnothing,1234]$ in Figure~\ref{F:m4}.
Notice how $M(1)=[\varnothing,1]$, $M(2)=[\varnothing,12]$ and $M(3)=[\varnothing,123]$.
In fact, this chain continues on infinitely as $M(1)\subset M(2)\subset M(3)\subset\cdots$, and we may consider every $M(n)$ as living inside the infinite poset $M(\infty)$, which is the set of positive integers $\mathbb N$ endowed with the same partial order relation $\preceq$ described in this section.
The only elements with non-chain, palindromic lower intervals are $1234$, and $123$.

\end{example}

We now start analyzing the type B case. Figure~\ref{dia:Bn} is a Dynkin diagram of type B. 

\begin{figure}[h!]
\centering\begin{tikzpicture}
\draw[line width=1pt]
  (0,0.1)--(1,0.1)
	(0,-0.1)--(1,-0.1);
\draw[line width=1pt]
	(1,0)--(3,0)
	(4,0)--(6,0);
\draw[loosely dotted, thick]
	(3,0)--(4,0);
\draw[fill=white, line width=1pt]
	(0,0) circle[radius=1.8mm]
	(1,0) circle[radius=1.8mm]
	(2,0) circle[radius=1.8mm]
	(5,0) circle[radius=1.8mm]
	(6,0) circle[radius=1.8mm];
\node at (0,-.5) {$s_0$};
\node at (1,-.5) {$s_1$};
\node at (2,-.5) {$s_2$};
\node at (5,-.5) {$s_{n-2}$};
\node at (6,-.5) {$s_{n-1}$};
\end{tikzpicture}
\caption{Type $B_n$ Dynkin diagram}\label{F:bn}
\label{dia:Bn}
\end{figure}
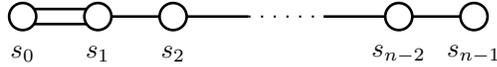

\begin{lemma}[$B_n/B_{n-1}$]
\label{lem:bnrightleft}
Let $W$ be a type $B_n$ Weyl group and $S$ be the set of simple roots, corresponding to the Dynkin diagram of Figure~\ref{dia:Bn}. Set $J=S\setminus\{s_{n-1}\}$. Then $W^J$ is a chain poset of length $2n$.
\end{lemma}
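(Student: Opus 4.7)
The plan is to realize $W^J$ explicitly via the signed-permutation model for $B_n$ and then produce a single maximal chain that exhausts it.

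First I would realize $W=B_n$ as the group of signed permutations of $\{1,\dots,n\}$, where $s_0$ negates the first coordinate and $s_i$ (for $1\le i\le n-1$) swaps coordinates $i$ and $i+1$. With $J=S\setminus\{s_{n-1}\}$, the parabolic $W_J=\langle s_0,\dots,s_{n-2}\rangle$ is the copy of $B_{n-1}$ acting on the first $n-1$ coordinates, so $u(n)=n$ for every $u\in W_J$. Consequently the left cosets $wW_J$ are classified by the value $w(n)\in\{\pm 1,\dots,\pm n\}$, giving $|W^J|=2n$ directly.

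Next I would exhibit, for each such value $k$, an explicit minimum-length representative. For $1\le k\le n$, set $w_k \defeq s_k s_{k+1}\cdots s_{n-1}$ (with the convention $w_n\defeq\id$), and for $1\le k\le n$ set $w_{-k}\defeq s_{k-1}s_{k-2}\cdots s_1 s_0 s_1 s_2\cdots s_{n-1}$. Applying the reflections from right to left gives $w_k(n)=k$ and $w_{-k}(n)=-k$, so these elements land in distinct cosets. A short inductive verification shows that the displayed words are reduced, with lengths $n-k$ and $n+k-1$ respectively, and that no simple reflection in $J$ is a right descent (for the first family, the only right descent is $s_{n-1}$; for the second, the prepended letters all strictly shorten only upon right-multiplication by $s_{n-1}$). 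Because the resulting lengths exhaust $\{0,1,\dots,2n-1\}$, these $2n$ elements are precisely the minimum-length representatives, and therefore constitute all of $W^J$.

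Finally, the chain structure is read off from the recursion. Each consecutive pair differs by prepending a single simple reflection in a length-additive fashion: $w_{k-1}=s_{k-1}\cdot w_k$ for $2\le k\le n$, then $w_{-1}=s_0\cdot w_1$, and then $w_{-k}=s_{k-1}\cdot w_{-(k-1)}$ for $2\le k\le n$. The subword property of Bruhat order upgrades each such multiplication to a cover relation, producing the saturated chain
\[\id=w_n\lessdot w_{n-1}\lessdot\cdots\lessdot w_1\lessdot w_{-1}\lessdot\cdots\lessdot w_{-n},\]
which comprises all of $W^J$. The only delicate bookkeeping is checking that each of the displayed expressions is reduced and has no right descent in $J$; both reduce to a direct inversion count using the length formula for signed permutations, which I expect to be the main (but routine) obstacle.
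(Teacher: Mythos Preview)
Your argument is correct, and at bottom it is the same as the paper's: the paper simply asserts that every element of $W^J$ is a right tail of the single reduced word $v_0=s_{n-1}\cdots s_1 s_0 s_1\cdots s_{n-1}$ for the longest element, which is exactly the list $w_n,w_{n-1},\dots,w_1,w_{-1},\dots,w_{-n}$ you write down. The difference is only in presentation: you supply the coset count $|W^J|=2n$ via the signed-permutation model and verify reducedness and the right-descent condition explicitly, whereas the paper leaves all of this implicit in its one-line proof; your version is more self-contained, the paper's is terser.
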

\begin{proof}
Every element of $W^J$ can be written as a tail of the reduced expression of the longest element
$v_0 = s_{n-1} \cdots s_1 s_0 s_1 \cdots s_{n-1}$.
\end{proof}

An example of $W^J$ in this case is shown in Figure~\ref{F:quotientb3b2}.
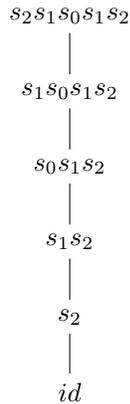
\begin{figure}[h!]
\centering
\begin{tikzpicture}
\node (id) at (0,0) {$id$};
\node (s2) at (0,1) {$s_2$};
\node (s12) at (0,2) {$s_1s_2$};
\node (s012) at (0,3) {$s_0s_1s_2$};
\node (s1012) at (0,4) {$s_1s_0s_1s_2$};
\node (s21012) at (0,5) {$s_2s_1s_0s_1s_2$};
\draw (id) -- (s2) -- (s12) -- (s012) -- (s1012) -- (s21012);
\end{tikzpicture}
\caption {Type $B_3/B_2$ quotient}
\label{F:quotientb3b2}
\end{figure}

The following Lemma is from exercise~$6$ in Chapter $8$ of \cite{BB05}:

\begin{lemma}[$B_n/A_{n-1}$, \cite{BB05}]
\label{lem:bnisomn}
Let $W$ be a type $B_n$ Weyl group and $S$ be the set of simple roots, corresponding to the Dynkin diagram of Figure~\ref{dia:Bn}. Set $J=S\setminus\{s_0\}$. Then $W^J \cong M(n)$. 
\end{lemma}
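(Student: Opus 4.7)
The plan is to realize $W=B_n$ as signed permutations and build an explicit rank- and order-preserving bijection $\phi\colon W^J \to M(n)$.

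First I would identify $B_n$ with the group of signed permutations of $[n]$, where $s_0$ flips the sign of position $1$ and $s_i$ (for $1\le i\le n-1$) swaps positions $i$ and $i+1$. Under this identification, $W_J = \langle s_1,\ldots,s_{n-1}\rangle\cong S_n$ acts on the right by permuting positions, so each coset $vW_J$ is determined by the signed multiset of values $\{v(1),\ldots,v(n)\}$ and hence by the subset $A(v)\defeq\{|v(i)|:v(i)<0\}\subseteq[n]$. The unique minimal-length representative of the coset indexed by $A$ is the signed permutation $v_A$ whose values appear in strictly increasing order: writing $A=\{a_1<\cdots<a_k\}$ and $[n]\setminus A=\{b_1<\cdots<b_{n-k}\}$,
$$v_A = (-a_k,\,-a_{k-1},\,\ldots,\,-a_1,\,b_1,\,\ldots,\,b_{n-k}).$$
Define $\phi(v_A)=A$; counting cosets gives $|W^J|=|B_n|/|S_n|=2^n=|M(n)|$, so $\phi$ is a bijection.

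Next I would verify that $\phi$ is rank-preserving. Using the standard length formula $\ell(v)=\inv(v)+\sum_{v(i)<0}|v(i)|$ for $B_n$, together with the fact that $v_A$ has no inversions by construction, we obtain $\ell(v_A)=\sum_{a\in A}a$. A quick analysis of the two kinds of cover in $M(n)$ (incrementing some $a_i$ by $1$ when legal, or adjoining the element $1$ when $1\notin A$) shows each cover adds exactly $1$ to $\sum_{a\in A}a$, so this sum is precisely the rank of $A$ in $M(n)$, matching $\ell(v_A)$.

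Third, I would intertwine the orders. For each cover $A\coveredby B$ in $M(n)$ I would exhibit a Bruhat cover $v_A\coveredby v_B$ in $W^J$ realized by a specific type-$B$ reflection acting on $v_A$: either an adjacent transposition of values that increments a single negative entry, or the move that flips the sign of the smallest positive value at position $k+1$ and re-sorts it into position $1$. Conversely, any Bruhat cover in $W^J$ must preserve the increasing-values property of minimal coset representatives, which forces the reflection responsible to fall into exactly these two families.

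The main obstacle is this last order-matching step; the cleanest way around it is to invoke the tableau criterion for Bruhat order on $B_n$ (see \cite[Thm.~8.1.1]{BB05}) and restrict it to minimal coset representatives. Once the criterion is pushed through $\phi$, the resulting inequalities become verbatim the conditions $j\le k$ and $a_{j-i}\le b_{k-i}$ defining $\preceq$, yielding $v_A\le v_B\iff A\preceq B$ without case analysis and completing the isomorphism.
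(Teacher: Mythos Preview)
Your proposal is correct and follows the standard route to this isomorphism: identify $B_n$ with signed permutations, parametrize minimal coset representatives of $W/W_J$ by their set of negative values, and match Bruhat order with $\preceq$ via the tableau criterion. The rank computation $\ell(v_A)=\sum_{a\in A}a$ and the description of the covers in $M(n)$ are accurate, and invoking \cite[Thm.~8.1.1]{BB05} to avoid a case analysis in the order-matching step is exactly the right shortcut.

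As for comparison with the paper: the paper does not actually prove this lemma. It is stated with attribution to \cite{BB05} (it is Exercise~6 in Chapter~8 there) and used as a black box. So you have supplied a full argument where the paper simply cites the literature; your argument is essentially the intended solution to that exercise.
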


\begin{proposition}
\label{cor:bnpalin}
The palindromic elements of $W^J$(in the previous Lemma) are exactly the locally-longest elements and chain elements of form $s_k \cdots s_0$ for $0 \leq k \leq n-1$.
\end{proposition}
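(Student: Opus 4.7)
The plan is to transfer the classification of palindromic elements in $M(n)$ from Proposition~\ref{prop:mnposet} across the isomorphism $W^J\cong M(n)$ of Lemma~\ref{lem:bnisomn}, and then match the three resulting families---$\varnothing$, singletons $\{k\}$, and initial intervals $[k]=\{1,\dots,k\}$ for $1\leq k\leq n$---with the locally-longest elements and with chain elements of the form $s_k\cdots s_0$ respectively.

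For the initial intervals, I would begin by noting that a nontrivial $v\in W^J$ necessarily has $s_0$ in the support $I$ of every reduced expression, since otherwise $v\in W_J$ forces $v=\id$. Requiring further that $I$ be connected in the type $B_n$ Dynkin diagram forces $I=\{s_0,s_1,\dots,s_{k-1}\}$ for some $1\leq k\leq n$. The embedded quotient $W_I^{I\cap J}$ is then of type $B_k/A_{k-1}$, so Lemma~\ref{lem:bnisomn} applied inside $W_I$ gives $W_I^{I\cap J}\cong M(k)$, which sits inside $M(n)$ as the subposet of subsets of $[k]$. Its longest element is $[k]$, so under $W^J\cong M(n)$ the locally-longest elements of $W^J$ are in bijection with the initial intervals $[k]$.

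For the singletons, I would make the isomorphism $W^J\cong M(n)$ explicit (via the signed-permutation realization of $B_n$ used in Lemma~\ref{lem:bnisomn}) just enough to verify that the saturated chain $\id\coveredby s_0\coveredby s_1 s_0\coveredby\cdots\coveredby s_{n-1}\cdots s_0$ in $W^J$ matches the chain $\varnothing\coveredby\{1\}\coveredby\{2\}\coveredby\cdots\coveredby\{n\}$ of singletons in $M(n)$. This identifies $s_k\cdots s_0$ with $\{k+1\}$ and, as a byproduct, shows that $[\id,s_k\cdots s_0]$ is a chain inside $W^J$, so each such element is indeed a chain element in the sense of the earlier definition.

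The main obstacle is purely bookkeeping: fixing the identification $W^J\cong M(n)$ concretely enough to pin down both families of Weyl group elements simultaneously. The substantive combinatorial step is already carried out in Proposition~\ref{prop:mnposet}; once the correspondence between subsets and reduced words is recorded, the remainder is a routine translation, with $\varnothing$ accounting for the trivial element on both sides.
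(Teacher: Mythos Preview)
Your approach is correct but takes a different route from the paper's. The paper argues by pure counting: by Lemma~\ref{lem:bnisomn} and Proposition~\ref{prop:mnposet} there are exactly $2n$ palindromic elements in $W^J$; it then lists $n+1$ locally-longest elements (indexed by $I=\varnothing,\{s_0\},\{s_0,s_1\},\dots,\{s_0,\dots,s_{n-1}\}$) and $n+1$ chain elements $\id,s_0,s_1s_0,\dots,s_{n-1}\cdots s_0$, notes that both families are palindromic with exactly two overlaps (at $\id$ and $s_0$), and concludes by cardinality that these $2n$ elements exhaust the palindromic ones. Your explicit matching---locally-longest elements to initial intervals $[k]$ and $s_k\cdots s_0$ to singletons $\{k+1\}$---yields strictly more information (in particular it directly certifies that each $s_k\cdots s_0$ is a chain element via $[\varnothing,\{k+1\}]$ being a chain in $M(n)$), but it requires unpacking the isomorphism of Lemma~\ref{lem:bnisomn} concretely and checking that the embedding $W_I^{I\cap J}\hookrightarrow W^J$ is compatible with $M(k)\hookrightarrow M(n)$, which you rightly flag as the main bookkeeping burden. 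The paper's counting argument sidesteps that work entirely.
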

\begin{proof}
From Lemma~\ref{lem:bnisomn} and Proposition~\ref{prop:mnposet}, the palindromic elements are pre-images of $\emptyset, 1,\ldots,n$ and  $[2], \ldots, [n]$. There are total $2n$ of them. We have $n+1$ locally-longest elements coming from $I = \emptyset, \{0\},\{0,1\},\ldots,\{0,\ldots,n-1\}$ and $n+1$ chain elements coming from $id, s_0,s_1s_0,\ldots,s_{n-1}\cdots s_0$. Since we have $2$ overlaps, these $2n$ elements are exactly all the palindromic elements of $W^J$.

\end{proof}

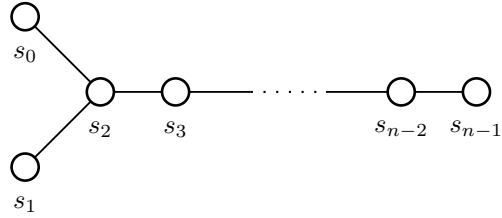
\begin{figure}[h!]
\centering\begin{tikzpicture}
\draw[line width=0.7pt]
	(0,1)--(1,0)
	(0,-1)--(1,0)
	(1,0)--(3,0)
	(4,0)--(6,0);
\draw[loosely dotted, thick]
	(3,0)--(4,0);
\draw[fill=white, line width=1pt]
	(0,1) circle[radius=1.8mm]
	(0,-1) circle[radius=1.8mm]
	(1,0) circle[radius=1.8mm]
	(2,0) circle[radius=1.8mm]
	(5,0) circle[radius=1.8mm]
	(6,0) circle[radius=1.8mm];
\node at (0,.5) {$s_0$};
\node at (0,-1.5) {$s_1$};
\node at (1,-.5) {$s_2$};
\node at (2,-.5) {$s_3$};
\node at (5,-.5) {$s_{n-2}$};
\node at (6,-.5) {$s_{n-1}$};
\end{tikzpicture}
\caption{Type $D_n$ Dynkin diagram}\label{dia:Dn}
\end{figure}

We now start analyzing the type D case. Figure~\ref{dia:Dn} is a Dynkin diagram of type D. We call the following leaf-removed quotient a type $D_n/D_{n-1}$ quotient.

\begin{lemma}[$D_n/D_{n-1}$]
\label{lem:typednpalin1}\rm
Let $W$ be a Weyl group of type $D_n$ with $n\geq 4$ and $S$ be the set of simple roots, corresponding to the Dynkin diagram of Figure~\ref{dia:Dn}. Let $J=S\setminus\{s_{n-1}\}$. Every palindromic element $v \in W^J$ is a locally-longest element.
\end{lemma}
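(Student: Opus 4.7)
The plan is to exploit the smallness of this quotient: $|W^J|=|D_n|/|D_{n-1}|=2n$, so $W^J$ can be described as an explicit $2n$-element poset and the palindromic elements read off by direct rank counting. I would first identify $W^J$ with the orbit of the fundamental weight dual to $\alpha_{n-1}$ under the signed-permutation action of $D_n$, namely $\{\pm e_1,\ldots,\pm e_n\}$, with $e_n$ representing the identity coset (its $D_n$-stabilizer is exactly $W_J\cong D_{n-1}$) and $-e_n$ the longest one.

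Using the standard $D_n$ convention that $s_i$ swaps $e_i\leftrightarrow e_{i+1}$ for $1\le i\le n-1$ and $s_0$ sends $e_1\leftrightarrow -e_2$, a direct check of covers in Bruhat order gives that $W^J$ consists of a descending chain $e_n\lhd e_{n-1}\lhd\cdots\lhd e_2$ (via $s_{n-1},\ldots,s_2$), a fork at $e_2$ into $e_1$ (via $s_1$) and $-e_1$ (via $s_0$), a merge at $-e_2$ reached from both $e_1$ and $-e_1$, and a further chain $-e_2\lhd -e_3\lhd\cdots\lhd -e_n$. In short, $W^J$ is a chain with a single rhombus inserted at its middle three ranks $n-2,n-1,n$.

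From this picture, the palindromic classification is a short rank count. Elements on the lower chain and both rank-$(n-1)$ elements $e_1,-e_1$ have chain lower intervals and are trivially palindromic. The top element $-e_n$ has rank sequence $(1,1,\ldots,1,2,1,\ldots,1,1)$ with the $2$ at the central rank $n-1$, so its lower interval is palindromic. For any $-e_k$ with $2\le k\le n-1$ the same rank-$2$ sits at rank $n-1$, which is no longer central in $[\id,-e_k]$, so the palindromic symmetry fails.

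Finally, for each palindromic $v$ I verify it is locally-longest by inspecting the support $I$. The chain elements $s_k\cdots s_{n-1}$ have $I=\{s_k,\ldots,s_{n-1}\}$, a connected $A_{n-k}$ subdiagram, and $v$ is the longest element of the chain quotient $W_I^{I\cap J}\cong A_{n-k}/A_{n-k-1}$. The two middle elements admit reduced words $s_1s_2\cdots s_{n-1}$ and $s_0s_2\cdots s_{n-1}$, with supports $\{s_1,\ldots,s_{n-1}\}$ and $\{s_0,s_2,\ldots,s_{n-1}\}$ respectively; each is a connected type-$A_{n-1}$ chain in the $D_n$ Dynkin diagram (using the $s_0$-leaf of the fork in the latter case), and $v$ is the longest in the corresponding embedded $A_{n-1}/A_{n-2}$ quotient. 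The top element $-e_n$ has $I=S$ and is tautologically the longest in $W^J$. The main obstacle is cleanly establishing the fork-and-merge cover structure of $W^J$; once that description is in place the rest is routine, because every palindromic $v$ has a support $I$ for which the embedded quotient $W_I^{I\cap J}$ is itself a chain topped by $v$.
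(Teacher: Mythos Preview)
Your proposal is correct and follows essentially the same route as the paper's own proof: both explicitly describe the $2n$-element poset $W^J$ as a chain with a single rhombus at the middle three ranks, then read off the palindromic elements by a direct rank count. The only differences are presentational---the paper parametrizes elements of $W^J$ as right tails of the reduced word $s_{n-1}\cdots s_2 s_1 s_0 s_2\cdots s_{n-1}$ (using that $s_0$ and $s_1$ commute to produce the two rank-$(n-1)$ elements), whereas you parametrize by the orbit $\{\pm e_i\}$---and you spell out the locally-longest verification for each palindromic $v$, which the paper leaves implicit.
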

\begin{proof}
Every element having only $s_{n-1}$ as a right descent can be written as a right tail of one of two reduced expressions of the maximal element:
$s_{n-1}\dots s_2 s_1 s_0 s_2 \dots s_{n-1} = s_{n-1}\dots s_2 s_0 s_1 s_2 \dots s_{n-1}$. 

Since $s_1$ and $s_0$ commutes, we have exactly one element of length $k$ for $0\leq k\leq 2n-2$ and $k\neq n-1$, and exactly two elements of length $n-1$. An example Hasse diagram is drawn in Figure~\ref{F:d5d4}. The nontrivial palindromic elements are exactly the elements of length $1 \leq k \leq n-1$.

\end{proof}

\begin{figure}
\centering
\begin{tikzpicture}
\node (e) at (0,0) {$e$};
\node (s4) at (0,1) {$s_4$};
\node (s34) at (0,2) {$s_3s_4$};
\node (s234) at (0,3) {$s_2s_3s_4$};
\node (s0234) at (-1,4) {$s_0s_2s_3s_4$};
\node (s1234) at (1,4) {$s_1s_2s_3s_4$};
\node (s10234) at (0,5) {$s_1s_0s_2s_3s_4$};
\node (s210234) at (0,6) {$s_2s_1s_0s_2s_3s_4$};
\node (s3210234) at (0,7) {$s_3s_2s_1s_0s_2s_3s_4$};
\node (s43210234) at (0,8) {$s_4s_3s_2s_1s_0s_2s_3s_4$};
\draw (e) -- (s4) -- (s34) -- (s234) -- (s0234) -- (s10234) -- (s210234) -- (s3210234) -- (s3210234) -- (s43210234);
\draw (s234) -- (s1234) -- (s10234);
\end{tikzpicture}
\caption{Type $D_5/D_4$ quotient}\label{F:d5d4}
\end{figure}
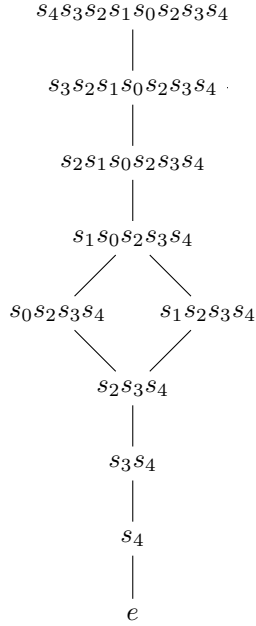

For $J=S\setminus\{s_0\}$ and $J'=S\setminus\{s_1\}$, we call $W^J\cong W^{J'}$ a type $D_n/A_{n-1}$ quotient. This quotient is isomorphic to $M(n-1)$ thanks to Stanley \cite{St}:

\begin{lemma}[$D_n/A_{n-1}$,\cite{St}]
\label{lem:dnmn}\
Let $W$ be a Weyl group of type $D_n$ for $n\geq 4$.
Let $J=S\setminus\{s_0\}$ or $J=S\setminus\{s_1\}$.
Then $W^J\cong M(n-1)$. 
\end{lemma}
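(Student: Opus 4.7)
The plan is to exhibit an explicit bijection $\phi\colon W^J \to M(n-1)$ and verify that it preserves the partial order. By the Dynkin diagram automorphism of $D_n$ interchanging $s_0$ and $s_1$, it suffices to treat $J = S\setminus\{s_0\}$.

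First I would model $D_n$ as the group of signed permutations of $[n]$ with an even number of negated entries, in which $s_1,\ldots,s_{n-1}$ generate the symmetric group of positions and $s_0$ acts by $(x_1,x_2,x_3,\ldots)\mapsto(-x_2,-x_1,x_3,\ldots)$. Then $W_J = S_n$, so two signed permutations lie in the same left coset of $W_J$ precisely when they share the same set $N \subseteq [n]$ of negated values. The $D_n$ parity constraint forces $|N|$ to be even, yielding $|W^J| = 2^{n-1} = |M(n-1)|$. The minimum-length representative $w_N$ of each coset is the unique element whose signed value sequence is increasing, and a direct computation with the standard $D_n$ length formula (counting standard inversions and negative-sum pairs) gives $\ell(w_N) = \sum_{k\in N}k - |N|$.

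Next I would define $\phi(N) \defeq \{k - 1 : k \in N,\ k \geq 2\} \subseteq [n-1]$, whose inverse sends $A \subseteq [n-1]$ to $\{a + 1 : a \in A\}$, with $1$ adjoined exactly when $|A|$ is odd. A short algebraic check confirms this is a bijection and that $\ell(w_N) = \sum_{a \in \phi(N)} a$, which equals the rank of $\phi(N)$ in $M(n-1)$ (as computed from the definition of $\preceq$). In particular $\phi$ matches the rank generating functions of the two posets.

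The principal step, and main obstacle, is verifying that $\phi$ carries covering relations to covering relations. A cover $v \coveredby v'$ in $W^J$ is realized by a positive root reflection descending to the quotient; translated onto $N$, each such cover is either an \emph{increment} (replacing some $k \in N$ by $k + 1 \notin N$, coming from an $s_k$-reflection) or a \emph{toggling} move involving $1 \in N$ (arising from the $s_0$-reflection). Matched against the two cover types in $M(n-1)$, namely incrementing a top element of $A$ by $1$ or inserting a new smallest element, these correspond under $\phi$. The toggling covers are the delicate case since they correspond to parity-changing moves on $\phi(N)$; careful bookkeeping matches each such cover with an insert-smallest cover in $M(n-1)$. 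Once all covers correspond, $\phi$ is the desired poset isomorphism.
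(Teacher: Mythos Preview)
The paper does not supply its own proof of this lemma: it is stated with a citation to Stanley \cite{St} and used as a black box. So there is no argument in the paper to compare yours against; your proposal is in effect a reconstruction of the cited result, and its overall architecture---model $D_n$ by even signed permutations, identify the cosets $wW_J$ with even subsets $N\subseteq[n]$, and build an explicit rank-preserving bijection to $M(n-1)$---is exactly the standard route and is correct. The map $\phi$ and its inverse are right, and your length computation $\ell(w_N)=\sum_{k\in N}k-|N|$ checks out against the $D_n$ length formula; the rank identity $\ell(w_N)=\sum_{a\in\phi(N)}a$ follows immediately.

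One point of imprecision is worth tightening. Your description of covers in $W^J$ as coming from ``an $s_k$-reflection'' or ``the $s_0$-reflection'' is misleading if read as simple reflections acting by right multiplication; those keep $N$ fixed and so move within a coset, not between cosets. What actually happens is that $D_n/A_{n-1}$ is a minuscule quotient, so Bruhat covers in $W^J$ coincide with left weak-order covers $w_N \coveredby s_i w_N$. Translating through your parametrisation, the cover types on $N$ are precisely: (i) replace some $k\in N$ by $k+1\notin N$, and (ii) adjoin $\{1,2\}$ when $1,2\notin N$. Under $\phi$ these match the two cover types of $M(n-1)$ (increment an element; insert $1$), with the case split on the parity of $|A|$ that you anticipate. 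Your ``toggling'' language does not quite capture move (ii), but once you phrase it this way the bookkeeping is short and the isomorphism follows.
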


\begin{proposition}
\label{prop:dnpalin}
The palindromic elements of $W^J$(in the previous Lemma) are exactly the locally-longest elements.
\end{proposition}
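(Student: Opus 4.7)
The plan is to use the isomorphism $W^J \cong M(n-1)$ from Lemma~\ref{lem:dnmn} to count palindromic elements via Proposition~\ref{prop:mnposet}, independently count the locally-longest elements of $W^J$, and show that every locally-longest element is palindromic; matching counts then force equality of the two sets. By the Dynkin diagram automorphism swapping $s_0$ and $s_1$, it suffices to treat $J = S \setminus \{s_0\}$. Proposition~\ref{prop:mnposet} identifies the palindromic elements of $M(n-1)$ as $\varnothing$, the $n-1$ singletons $\{k\}$, and the $n-1$ initial segments $[k]$; since $\{1\} = [1]$, there are $2n - 2$ palindromic elements in total.

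Next I enumerate the locally-longest elements. Any nontrivial $v \in W^J$ has right descent set contained in $S \setminus J = \{s_0\}$, so the support $I$ of $v$ must contain $s_0$. Since $s_0$ is a leaf of the Dynkin diagram attached only to $s_2$, the connected subsets $I \subseteq S$ containing $s_0$ are: $I = \{s_0\}$, the type $A_k$ supports $I = \{s_0, s_2, s_3, \ldots, s_k\}$ for $2 \leq k \leq n-1$, and the type $D_{k+1}$ supports $I = \{s_0, s_1, s_2, \ldots, s_k\}$ for $2 \leq k \leq n-1$. The longest quotient element of the $A_k$ support is $s_k s_{k-1} \cdots s_2 s_0$ of length $k$, and the longest quotient element of the $D_{k+1}$ support has length $\binom{k+1}{2}$ (by applying Lemma~\ref{lem:dnmn} to the embedded $D_{k+1}$ subsystem and recalling the rank of the longest element of $M(k)$). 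Together with $I = \varnothing$ for $v = \mathrm{id}$, this yields $1 + 1 + (n-2) + (n-2) = 2n - 2$ locally-longest elements. The lengths produced, namely $k$ for $1 \leq k \leq n-1$ and $\binom{k+1}{2}$ for $2 \leq k \leq n-1$, match the ranks of singletons and initial segments in $M(n-1)$ respectively, which is a reassuring internal consistency check.

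Finally, to see each locally-longest $v$ is palindromic, let $v$ be the longest of $W_I^{I \cap J}$. By the subword characterization of Bruhat order, any $u \leq v$ in $W$ lies in $W_I$, and a descent-set check then places $u$ in $W_I^{I \cap J}$. So the lower interval $[\mathrm{id}, v]$ inside $W^J$ equals the embedded quotient $W_I^{I \cap J}$ as a subposet, whose rank generating function is palindromic since the partial flag variety $G_I / P_{I \cap J}$ satisfies Poincar\'e duality. With $2n-2$ locally-longest elements (all palindromic) and only $2n-2$ palindromic elements total, the two sets coincide. The main obstacle I anticipate is confirming the length $\binom{k+1}{2}$ for the $D_{k+1}$-family; this requires invoking Lemma~\ref{lem:dnmn} on each embedded $D_{k+1}$-subsystem, but is routine once the identification with $M(k)$ is in hand.
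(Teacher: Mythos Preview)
Your proof is correct and follows essentially the same approach as the paper: count palindromic elements via the isomorphism $W^J \cong M(n-1)$ and Proposition~\ref{prop:mnposet}, enumerate the $2n-2$ locally-longest elements by listing the connected supports $I$ containing $s_0$, and conclude by matching cardinalities. Your argument is in fact slightly more complete than the paper's, which omits the explicit verification that every locally-longest element is palindromic (the containment step needed to turn equal counts into equal sets); your embedded-quotient/Poincar\'e-duality justification fills that gap.
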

\begin{proof}
From Lemma~\ref{lem:dnmn} and Proposition~\ref{prop:mnposet}, the palindromic elements are pre-images of $\emptyset,1,\ldots,n-1$ and $[2], \ldots, [n-1]$. There are total $2n-2$ of them. We have $2n-2$ locally-longest elements coming from $I = \emptyset,\{0\},\{0,2\},\ldots,\{0,2,3,\ldots,n-1\}, \{0,2,1\},\ldots,\{0,2,1,3,\ldots,n-1\}$. Hence these are exactly all of the palindromic elements of $W^J$.

\end{proof}

\subsection{type E}

We now start analyzing the type E case. Figure~\ref{dia:En} has Dynkin diagrams of type $E_6,E_7$ and $E_8$. Since the leaf quotients of $E_6$ and $E_7$ are embedded inside the leaf quotients of $E_8$, we will analyze $E_8$ only.

\begin{figure}[h!]
\centering
\subfigure[$E_6$]{
\begin{tikzpicture}
\draw[line width=0.7pt]
	(0,0)--(4,0)
	(2,0)--(2,1);
\draw[fill=white, line width=1pt]
	(0,0) circle[radius=1.8mm]
	(1,0) circle[radius=1.8mm]
	(2,0) circle[radius=1.8mm]
	(2,1) circle[radius=1.8mm]
	(3,0) circle[radius=1.8mm]
	(4,0) circle[radius=1.8mm];
\node at (0,-.5) {$s_1$};
\node at (1,-.5) {$s_3$};
\node at (2,-.5) {$s_4$};
\node at (2.5,1) {$s_2$};
\node at (3,-.5) {$s_5$};
\node at (4,-.5) {$s_6$};
\end{tikzpicture}
}\qquad
\subfigure[$E_7$]
{
\begin{tikzpicture}
\draw[line width=0.7pt]
	(0,0)--(5,0)
	(2,0)--(2,1);
\draw[fill=white, line width=1pt]
	(0,0) circle[radius=1.8mm]
	(1,0) circle[radius=1.8mm]
	(2,0) circle[radius=1.8mm]
	(2,1) circle[radius=1.8mm]
	(3,0) circle[radius=1.8mm]
	(4,0) circle[radius=1.8mm]
	(5,0) circle[radius=1.8mm];
\node at (0,-.5) {$s_1$};
\node at (1,-.5) {$s_3$};
\node at (2,-.5) {$s_4$};
\node at (2.5,1) {$s_2$};
\node at (3,-.5) {$s_5$};
\node at (4,-.5) {$s_6$};
\node at (5,-.5) {$s_7$};
\end{tikzpicture}
}\\
\subfigure[$E_8$]
{
\begin{tikzpicture}
\draw[line width=0.7pt]
	(0,0)--(6,0)
	(2,0)--(2,1);
\draw[fill=white, line width=1pt]
	(0,0) circle[radius=1.8mm]
	(1,0) circle[radius=1.8mm]
	(2,0) circle[radius=1.8mm]
	(2,1) circle[radius=1.8mm]
	(3,0) circle[radius=1.8mm]
	(4,0) circle[radius=1.8mm]
	(5,0) circle[radius=1.8mm]
	(6,0) circle[radius=1.8mm];
\node at (0,-.5) {$s_1$};
\node at (1,-.5) {$s_3$};
\node at (2,-.5) {$s_4$};
\node at (2.5,1) {$s_2$};
\node at (3,-.5) {$s_5$};
\node at (4,-.5) {$s_6$};
\node at (5,-.5) {$s_7$};
\node at (6,-.5) {$s_8$};
\end{tikzpicture}
}
\caption{Finite type $E_6,E_7,E_8$}\label{dia:En}
\end{figure}
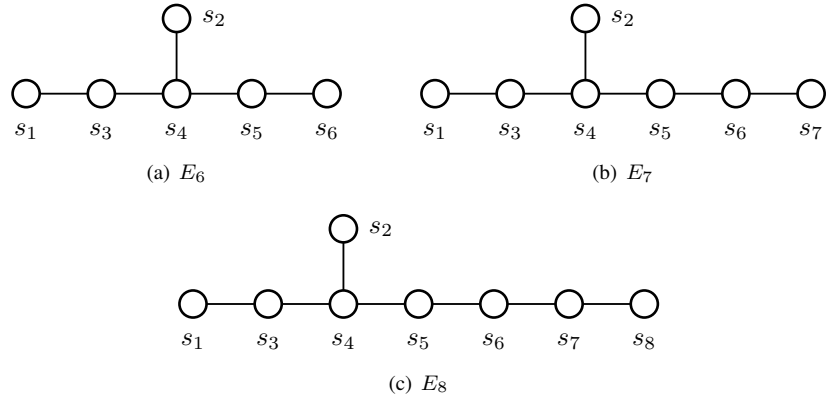

\begin{example}[$E_8$]
\label{ex:E8}
The following are the elements of $^{S \setminus \{s\}}W$ when $W$ is a Weyl group of exceptional type, $S$ the simple roots of $W$ and $s$ a leaf of the Dynkin diagram as in Figure~\ref{dia:En}. We only list the nontrivial palindromic elements. The result was obtained by using the Coxeter package developed by Stembridge. For this example, we use $s_i$ and $i$ interchangeably, so a list of numbers corresponds to a word of simple roots.

\begin{enumerate}
\item $W=E_8, s=2$
\begin{itemize}
\item $[2]$
\item $[2,4]$
\item $[2,4,3]$
\item $[2,4,3,1]$
\item $[2,4,3,1,5,6,4,5,3,4,2,4,3,1,5,6,4,5,3,4,2]$
\item $[2,4,5]$
\item $[2,4,5,6]$
\item $[2,4,5,6,7]$
\item $[2,4,5,6,7,8]$
\item $[2,4,5,3,4,2]$
\item $[2, 4, 5, 3, 4, 2, 6, 5, 4, 3]$
\item $[2, 4, 5, 3, 4, 2, 6, 7, 5, 6, 4, 5, 3, 4, 2]$
\item $[2, 4, 5, 3, 4, 2, 6, 7, 5, 6, 4, 5, 3, 4, 2, 8, 7, 6, 5, 4, 3]$
\item $[2, 4, 5, 3, 4, 2, 1, 3, 4, 5]$
\item $[2, 4, 5, 3, 4, 2, 1, 6, 7, 5, 6, 4, 5, 3, 4, 2, 5, 4, 3, 1, 6, 7, 5, 6, 4, 5, 3, 4, 2, 5, 4, 3, 1, 6, 7, 5, 6, 4, 5, 3, 4, 2]$
\end{itemize}

\item $W=E_8,s=1$
\begin{itemize}
\item $[1]$
\item $[1,3]$
\item $[1,3,4]$
\item $[1,3,4,5]$
\item $[1,3,4,5,6]$
\item $[1,3,4,5,6,7]$
\item $[1,3,4,5,6,7,8]$
\item $[1,3,4,2]$
\item $[1, 3, 4, 5, 2, 4, 3, 1]$
\item $[1, 3, 4, 5, 2, 4, 3, 1, 6, 5, 4, 3, 2, 4, 5, 6]$
\item $[1, 3, 4, 5, 6, 7, 2, 4, 5, 6, 3, 4, 5, 2, 4, 3, 1, 3, 4, 5, 6, 7, 2, 4, 5, 6, 3, 4, 5, 2, 4, 3, 1]$
\end{itemize}

\item $W=E_8,s=8$
\begin{itemize}
\item $[8]$
\item $[8,7]$
\item $[8,7,6]$
\item $[8,7,6,5]$
\item $[8,7,6,5,4]$
\item $[8,7,6,5,4,2]$
\item $[8,7,6,5,4,3]$
\item $[8,7,6,5,4,3,1]$
\item $[8, 7, 6, 5, 4, 3, 2, 4, 5, 6, 7, 8]$
\end{itemize}

In each of the cases, every element is a locally-longest element.

\end{enumerate}

\end{example}

\subsection{Conclusion}

Combining the case-by-case results analyzed so far, we reach the following conclusion:

\begin{theorem}
\label{thm:quotpalin}
Let $W$ be a Weyl group with simple reflections $S$, and let $J \defeq S \setminus \{s\}$ where $s$ is a leaf of the Dynkin diagram. Then $v \in ^JW$ is palindromic if and only if one of the following holds:
\begin{itemize}
\item $v$ is a locally-longest element,
\item $v$ is in a local-chain,
\item Case $F_4$ with $J = S \setminus \{s_4\}$ and $v$ as $s_4 s_3 s_2 s_1$,
\item Case $F_4$ with $J = S \setminus \{s_1\}$ and $v$ as $s_1 s_2 s_3 s_4$,
\item Case $B_n$ with $J = S \setminus \{s_0\}$ and $v$ as $s_0 \cdots s_k$ for some $0 \leq k \leq n-1$.
\end{itemize}

\end{theorem}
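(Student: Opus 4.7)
The plan is to consolidate the case-by-case work of Sections 4.1--4.3 and verify that it covers every finite irreducible Weyl group. I would first note the easy ``if'' direction: when $v$ is locally-longest, the lower interval $[\id,v]$ in ${}^JW$ coincides with the full embedded quotient $W_I^{I\cap J}$, which carries a longest-element involution and is therefore palindromic; when $v$ is in a local chain, $[\id,v]$ is an interval in a chain, hence trivially palindromic; and the listed exceptions are palindromic by direct verification. The content of the theorem is thus the converse, which reduces to a finite enumeration.

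To organize the converse I would first reduce to the irreducible case: if $W = W_1 \times W_2$ and the leaf $s$ lies in $W_1$, then $J = (S_1 \setminus \{s\}) \cup S_2$ gives $W_J = W_{J_1} \times W_2$ and ${}^JW \cong {}^{J_1}W_1$ as posets, so palindromicity, locally-longest-ness, and the local-chain condition all pull back to the single irreducible component containing $s$. By the classification of finite irreducible Weyl groups I then work through the types $A_n$, $B_n$ (which covers $C_n$ as a Coxeter group), $D_n$, $E_6$, $E_7$, $E_8$, $F_4$, and $G_2$, each with every leaf of its Dynkin diagram up to diagram automorphism. The enumerations are supplied by the $A_n$ example (all palindromic elements locally-longest), Example~\ref{ex:F4} for $F_4$ (six palindromic elements with a single non-locally-longest, non-local-chain exception), the $G_2$ example, Lemma~\ref{lem:bnrightleft} together with Proposition~\ref{cor:bnpalin} for $B_n$ (the latter resting on the $M(n)$-isomorphism of Lemma~\ref{lem:bnisomn} and Proposition~\ref{prop:mnposet}), Lemma~\ref{lem:typednpalin1} together with Proposition~\ref{prop:dnpalin} for $D_n$ (via Lemma~\ref{lem:dnmn} and Proposition~\ref{prop:mnposet}), and Example~\ref{ex:E8} for $E_6$, $E_7$, $E_8$, where the leaf quotients of $E_6$ and $E_7$ either embed into those of $E_8$ (when the chosen leaf is shared) or are handled by the $E_6$ Dynkin symmetry or an analogous direct enumeration.

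The last step is a convention switch: the propositions and the $F_4$ and $B_n$ examples were phrased for the left quotient $W^J$, while the theorem concerns the right quotient ${}^JW$. The map $v \mapsto v^{-1}$ is an order isomorphism $W^J \to {}^JW$ that reverses reduced words, so applying it converts the earlier exceptions into those of the theorem; this explains why $s_1 s_2 s_3 s_4$ appears here as $s_4 s_3 s_2 s_1$ in the $F_4$ case and why $s_k \cdots s_0$ becomes $s_0 \cdots s_k$ in the $B_n$ case. The principal obstacle is really just bookkeeping---making sure no case slips through and that the left/right conventions are consistent---since all of the genuine combinatorial content has already been established in the preceding subsections.
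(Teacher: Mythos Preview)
Your proposal is correct and matches the paper's approach exactly: the paper offers no proof beyond the sentence ``Combining the case-by-case results analyzed so far, we reach the following conclusion,'' so consolidating Sections~4.1--4.3 is precisely what is intended. You are in fact more careful than the paper about the ``if'' direction, the reduction to irreducible $W$, and the $W^J \leftrightarrow {}^JW$ convention switch via $v\mapsto v^{-1}$, all of which the paper leaves implicit.
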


This implies that a rationally smooth $w$ has a BP-decomposition where $v$ is a locally-longest element or is in a local chain except for few cases. Similarly, it was shown in \cite{SLOF} that a rationally smooth $w$ has a BP-decomposition into a chain element except for few special cases.

The theorem implies a conjecture that was previously asked by the authors \cite{OY}:

\begin{corollary}
\label{cor:conj}
Let $W$ be a Weyl group with simple reflections $S$, and let $J \defeq S \setminus \{s\}$ where $s$ is a leaf of the Dynkin diagram. Then $v \in W^J$ is palindromic if and only if $v$ is a locally-longest element or a chain element.
\end{corollary}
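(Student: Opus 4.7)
The plan is to deduce Corollary~\ref{cor:conj} from Theorem~\ref{thm:quotpalin} by transporting the classification from ${}^JW$ to $W^J$ along the inverse bijection $v\mapsto v^{-1}$. Since inversion is a length-preserving involution of $W$ that preserves Bruhat order, it restricts to a poset isomorphism between $[\id,v]\subseteq W^J$ and $[\id,v^{-1}]\subseteq {}^JW$; in particular, $v$ is palindromic in $W^J$ if and only if $v^{-1}$ is palindromic in ${}^JW$. So I can apply Theorem~\ref{thm:quotpalin} to $v^{-1}$ and translate each of its three conclusions back to a statement about $v$.

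First I would verify that being locally-longest is self-dual under inversion: the support of $v^{-1}$ equals the support $I$ of $v$, and the longest element of ${}^{I\cap J}W_I$ is the inverse of the longest element of $W_I^{I\cap J}$, so $v^{-1}$ is locally-longest in ${}^JW$ iff $v$ is locally-longest in $W^J$. Next, I would show that being in a local chain in ${}^JW$ forces $v$ to be a chain element of $W^J$: inversion identifies ${}^{I\cap J}W_I$ with $W_I^{I\cap J}$ as posets, so if the former is a chain, so is the latter; and since every $u\leq v$ in $W^J$ has support contained in $I$ and hence lies in $W_I^{I\cap J}$, the interval $[\id,v]$ in $W^J$ coincides with its counterpart in $W_I^{I\cap J}$ and is therefore a chain.

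Next I would dispatch the three exceptional cases of Theorem~\ref{thm:quotpalin} by inverting each one and checking directly that the resulting element of $W^J$ is a chain element. Inverting $s_4s_3s_2s_1$ in the $F_4$ case with $J=S\setminus\{s_4\}$ produces $s_1s_2s_3s_4\in W^J$, whose lower interval is read off from Example~\ref{ex:F4} as the chain $\{\id,s_4,s_3s_4,s_2s_3s_4,s_1s_2s_3s_4\}$; the other $F_4$ case is symmetric; and inverting $s_0s_1\cdots s_k$ in the $B_n$ case with $J=S\setminus\{s_0\}$ yields $s_k\cdots s_1s_0\in W^J$, which is already identified as a chain element in the proof of Proposition~\ref{cor:bnpalin}. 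This finishes the forward direction. For the converse, if $v$ is locally-longest, then $[\id,v]=W_I^{I\cap J}$ is palindromic because the rank generating function of any parabolic quotient of a Weyl group is palindromic, and if $v$ is a chain element, then $[\id,v]$ is a chain and is trivially rank-symmetric.

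The main obstacle I expect is the bookkeeping for the three exceptional cases: I need to confirm that each one, after inversion, really does land in the \emph{chain element} bucket of $W^J$ (rather than falling outside both categories), and that no new exceptional class arises on the $W^J$ side. The self-duality of locally-longest elements and the implication \emph{in a local chain} $\Rightarrow$ \emph{chain element} should be essentially immediate once the inversion correspondence between $W^J$ and ${}^JW$, and between $W_I^{I\cap J}$ and ${}^{I\cap J}W_I$, has been set up carefully.
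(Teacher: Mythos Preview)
Your proposal is correct and matches the paper's intended argument. The paper gives no separate proof of Corollary~\ref{cor:conj}, simply asserting that Theorem~\ref{thm:quotpalin} implies it; the point is that the entire case-by-case analysis of Section~4 was already carried out in $W^J$ (see the opening paragraph of that section and, e.g., Proposition~\ref{cor:bnpalin} and Example~\ref{ex:F4}), so the corollary is just a restatement of those computations, while Theorem~\ref{thm:quotpalin} is their translation to ${}^JW$ via the same inversion you use. Your write-up makes explicit the two steps the paper leaves tacit: that ``in a local chain'' implies ``chain element'' once one identifies $[\id,v]$ with an interval in $W_I^{I\cap J}$, and that each exceptional item of Theorem~\ref{thm:quotpalin}, after inversion, lands among the chain elements of $W^J$ (exactly as recorded in Example~\ref{ex:F4} and Proposition~\ref{cor:bnpalin}).
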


The conjecture was actually stated for entire Coxeter groups, but we show in the last section that it is not true for type $H_4$.

\section{Factoring the polynomial $R_w(q)$}
In this section we will study how $R_w(q)$ behaves with respect to the BP-decomposition. From a hyperplane arrangement $\A$, we can get a poset structure on the set of chambers of that arrangement.

\begin{definition}
Given an arrangement $\A$ and a choice of its fundamental chamber $r_0$, we get a poset structure $Q_{\A}$ on the set of chambers by the covering relation $r \triangleleft r'$ whenever $r$ and $r'$ are adjacent (separated by a hyperplane) and $d(r_0,r') = d(r_0,r)+1$. 
\end{definition}

Then the distance enumerating polynomial $R_{\A}$ of $\A$ is simply the rank generating function of its poset $Q_{\A}$. 

Let $\A$ be a central hyperplane arrangement with a pre-fixed fundamental chamber $r_0$. Also let $\A'$ be some subarrangement of $\A$ and $r$ some chamber of $\A'$. We define the induced subposet $Q_{\A,\A',r}$ to be the induced subposet of $Q_{\A}$ on the chambers of $\A$ that is contained in $r$. We will say that $\A$ is \newword{uniform} with respect to $\A'$ if for all chambers $r$ of $\A'$, the induced subposets $Q_{\A,\A',r}$ are all isomorphic. When this happens, we use $Q_{\A,\A'}$ to denote the poset.

If $w_0$ is the longest element of $W$, the arrangement $\A_{w_0}$ is simply the well known \newword{Coxeter arrangement} of $W$. Here each chamber is indexed with a permutation $w \in W$ and two chambers $u,v$ are adjacent if and only if $v=us_i$ and their length differs exactly by $1$. Hence the poset $Q_{\A_{w_0}}$ where $w_0$ is the longest element of $W$ is exactly the \newword{weak Bruhat order} of $W$. It is a well known fact that the weak Bruhat order of $W$ and the strong Bruhat order of $W$ are different poset structures on the same set of elements with same rank \cite{BB05}. From this it follows that:

\begin{lemma}
\label{lem:maxsame}
When $w_0$ is the longest element of $W$, we have $P_{w_0}(q) = R_{w_0}(q)$. When $u_0$ is the longest element of $W_J$ for some $J \subset S$, $\A_{w_0}$ is uniform with respect to $\A_{u_0}$. 
\end{lemma}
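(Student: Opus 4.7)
The first equation $P_{w_0}(q)=R_{w_0}(q)$ is immediate from the preceding discussion. Since $w_0$ is the maximum of the Bruhat order, $P_{w_0}(q)=\sum_{w\in W}q^{\ell(w)}$. On the hyperplane side, the chambers of the Coxeter arrangement $\A_{w_0}$ are in natural bijection with $W$, and the number of hyperplanes separating $r_0$ from the chamber $r_w$ is exactly $|\Delta_w|=\ell(w)$, so $R_{w_0}(q)=\sum_{w\in W}q^{\ell(w)}$ as well.

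For the uniformity claim, first observe that since $u_0$ is the longest element of $W_J$, we have $\Delta_{u_0}=\Delta_J^+$, the positive roots of the root subsystem spanned by $J$, so $\A_{u_0}$ is the Coxeter arrangement of $W_J$ and its chambers are naturally indexed by the elements of $W_J$. The plan is to use the parabolic decomposition $w=vu$ with $v\in W_J$, $u\in{}^JW$, and $\ell(w)=\ell(v)+\ell(u)$, first to identify the chambers of $\A_{w_0}$ contained in a given chamber of $\A_{u_0}$, and then to compare the resulting subposets across chambers. The key step is the following sign calculation: for $\alpha\in\Delta_J^+$, write $\alpha=\sum_{s\in J}c_s\alpha_s$ with $c_s\geq 0$. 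Since $u\in{}^JW$ has no left descent in $J$, we have $u^{-1}(\alpha_s)>0$ for each $s\in J$, and hence $u^{-1}(\alpha)=\sum_{s\in J}c_su^{-1}(\alpha_s)$ is a positive root. It follows that $(vu)^{-1}(\alpha)=u^{-1}(v^{-1}(\alpha))$ has the same sign as $v^{-1}(\alpha)$, so $r_{vu}$ lies on the same side of every hyperplane of $\A_{u_0}$ as $r_v$. Thus the chambers of $\A_{w_0}$ inside the $\A_{u_0}$-chamber corresponding to $v$ are exactly $\{r_{vu}\colon u\in{}^JW\}$.

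To finish, since $Q_{\A_{w_0}}$ is the right weak order on $W$, the induced subposet $Q_{\A_{w_0},\A_{u_0},r_v}$ is the restriction of the right weak order to $\{vu\colon u\in{}^JW\}$. Using $\ell(vu_i)=\ell(v)+\ell(u_i)$ for $u_i\in{}^JW$, one checks directly that $vu_1\leq vu_2$ in the right weak order of $W$ if and only if $u_1\leq u_2$ in the right weak order of $W$. Hence $u\mapsto vu$ is a poset isomorphism from ${}^JW$, with its induced weak order, onto $Q_{\A_{w_0},\A_{u_0},r_v}$, and the target does not depend on the choice of $v$, establishing uniformity. The main obstacle here is really only bookkeeping: once the chamber-labelling convention $r_w=wr_0$ is fixed so that the right weak order matches the chamber adjacency of the Coxeter arrangement, and the decomposition is chosen with $W_J$ on the left, both the sign calculation and the poset isomorphism reduce to direct verifications.
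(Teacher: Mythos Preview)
Your proof is correct and follows essentially the same approach as the paper: both use the parabolic decomposition to identify the chambers of $\A_{w_0}$ lying in a fixed chamber of $\A_{u_0}$ with a coset $vW^{\text{(or }{}^JW)}$, and then observe that the induced subposet is the right weak order on the quotient, independent of the chosen chamber. The paper's argument is quite terse on these points; your version supplies the details, in particular the sign computation showing $u^{-1}(\alpha)>0$ for $\alpha\in\Delta_J^+$ and $u\in{}^JW$, and the length calculation $\ell((vu_1)^{-1}vu_2)=\ell(u_1^{-1}u_2)=\ell(u_2)-\ell(u_1)$ giving the weak-order equivalence.
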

\begin{proof}
Each chamber of $\A_{w_0}$ is indexed with a permutation of $w \in W$. Each permutation $w \in W$ has a parabolic decomposition $uv$ where $u \in W_J$ and $v \in ^JW$. The chambers indexed by $uv$ with common $u \in W_J$ are contained in the same chamber indexed by $u$ in $\A_{u_0}$. For each chamber $u$ in $\A_{u_0}$, the chambers of $\A_{w_0}$ contained in it are only separated by hyperplanes of $\A_{w_0} \setminus \A_{u_0}$. The poset $Q_{\A_{w_0},\A_{u_0}}$ is the right weak Bruhat order on $W^J$.
\end{proof}

Now we start the analysis of $R_w(q)$ when $w$ is rationally smooth. The first step is the following lemma:

\begin{lemma}\label{u:right descent}
Let $w\in W$ be a rationally smooth element and $w=uv$ be a BP-decomposition. Then every simple reflection in $J$ appearing in the reduced word of $v$
is a right descent of $u$.
\end{lemma}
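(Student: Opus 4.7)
The plan is to argue by contradiction. Suppose that some $s \in J$ appears in a reduced expression of $v$, yet $s \notin D_R(u)$, i.e.\ $\ell(us) > \ell(u)$. I want to show that this forces $us$ to be an element of $W_J$ strictly above $u$ in Bruhat order and still below $w$, contradicting the defining property of $u = m(w,J)$ as the \emph{maximum} element of $W_J \cap [id,w]$.

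The first step is to observe that $s \leq v$ in the strong Bruhat order. This is immediate from the subword property: $s$ is a one-letter subword of any reduced expression for $v$ in which it occurs, and by Matsumoto's theorem the set of simple reflections appearing in a reduced expression is an invariant of $v$.

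The second step is a length-additive lifting argument. Because $w = uv$ is a parabolic decomposition, we have $\ell(w) = \ell(u) + \ell(v)$, so the concatenation of any reduced expression for $u$ with any reduced expression for $v$ is a reduced expression for $w$. Consequently, for any $v' \leq v$, the element $uv'$ is a product of a subword of that reduced expression, hence by the subword characterization of Bruhat order $uv' \leq w$. Applying this with $v' = s$ gives $us \leq w$.

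For the final step, note that $us \in W_J$ since both $u$ and $s$ lie in $W_J$, and the assumption $\ell(us) > \ell(u)$ together with the fact that $u$ and $us$ differ by right-multiplication by a simple reflection forces $u < us$ in strong Bruhat order. Thus $us$ is an element of $W_J$ with $us \leq w$ and $us > u$, contradicting the fact that $u = m(w,J)$ is the unique maximum of $W_J$ below $w$ guaranteed by Lemma~\ref{thm:Pfactor}'s preceding lemma. Therefore $s$ must be a right descent of $u$, as claimed.

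The only place where some care is needed is the length-additive lifting step; the rest is bookkeeping. That step is standard in Coxeter-group theory (it is essentially the subword property applied to the specific reduced expression obtained from concatenation) and does not rely on any of the case-by-case analysis of Section~4, so the argument is uniform across types.
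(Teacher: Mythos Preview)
Your argument is correct and is essentially the same as the paper's own proof: both show that $us \leq w$ via the subword property (using the length-additive factorization $w = uv$) and then invoke the maximality of $u = m(w,J)$ inside $W_J$. The only cosmetic difference is that the paper phrases this directly (``$us \in W_J$ is below $w$, hence below $u$'') whereas you wrap the same observation in a contradiction argument.
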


\begin{proof}
 If we delete every simple reflection appearing in $v$ but one in $J$, then the resulting element is in $W_J$ and is below $w$. Hence by maximality of $u$, it is below $u$.
\end{proof}

Actually, we can state much more about $u$ in terms of simple reflections of $J$ appearing in $v$.

\begin{lemma}\label{u:right reflection}
Let $w=uv$ be a BP-decomposition with respect to $J$. Let $I$ be the subset of $S$ that appears in the reduced word of $v$. Then every reflection formed by simple reflections in $I\cap J$ is a right inversion reflection of $u$. In fact, there is a minimal length decomposition $u=u'u_{I \cap J}$ where $u_{I \cap J}$ is the longest element of $W_{I\cap J}$.
\end{lemma}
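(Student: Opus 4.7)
The plan is to reduce the entire statement to one standard fact about parabolic decompositions once Lemma~\ref{u:right descent} is in hand. Set $J' := I \cap J$. Since by definition every simple reflection of $I$ appears in the reduced word of $v$, Lemma~\ref{u:right descent} yields $J' \subseteq D_R(u)$.

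I would then use the right-sided version of the parabolic decomposition applied to $u$ and the subgroup $W_{J'}$: there exist unique $u' \in W$ and $u'' \in W_{J'}$ with $u = u' \cdot u''$, $\ell(u) = \ell(u') + \ell(u'')$, and $u'$ having no right descent in $J'$ (this is just the mirror of the decomposition from Section~3 obtained by inversion). The core technical step is the identity
\[
D_R(u) \cap J' = D_R(u'').
\]
This follows because for $s \in J'$ the element $u'' s$ still lies in $W_{J'}$, so $us = u' \cdot (u'' s)$ is again a parabolic decomposition of the same form, forcing $\ell(us) = \ell(u') + \ell(u'' s)$; hence $us < u$ iff $u'' s < u''$.

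Combining these two facts gives $D_R(u'') \supseteq J'$. Since $u'' \in W_{J'}$, and the longest element $w_0(J') = u_{I \cap J}$ is the unique element of $W_{J'}$ whose right descent set contains every simple generator of $W_{J'}$, we conclude $u'' = u_{I \cap J}$. This delivers the advertised minimal-length decomposition $u = u' \cdot u_{I \cap J}$.

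The stronger assertion about right inversion reflections then comes for free: the right inversion set of $u_{I \cap J} = w_0(J')$ is precisely $T \cap W_{J'}$, and in any reduced factorization $u = u' \cdot x$ one has $T_R(x) \subseteq T_R(u)$ by a one-line length comparison. The only subtle point in this plan is the identity $D_R(u) \cap J' = D_R(u'')$, but it is a standard consequence of length-additivity of parabolic factorizations, so I do not foresee a real obstacle.
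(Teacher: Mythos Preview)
Your proposal is correct and follows essentially the same route as the paper: both take the right parabolic decomposition $u=u'u''$ with respect to $W_{I\cap J}$, observe that for $s\in I\cap J$ the factorization $us=u'(u''s)$ is again length-additive so that $s\in D_R(u)\Leftrightarrow s\in D_R(u'')$, and then invoke Lemma~\ref{u:right descent} to force $u''$ to be the longest element of $W_{I\cap J}$. Your write-up is slightly more explicit in spelling out the identity $D_R(u)\cap J'=D_R(u'')$ and in deriving the inversion-reflection statement from $T_R(u_{I\cap J})\subseteq T_R(u)$, whereas the paper just says ``the rest follows''; otherwise the arguments coincide.
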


\begin{proof}
Take the parabolic decomposition of $u$ under the right quotient by $W_{I\cap J}$. Say, $u=u'u''$. Then $u'$ is the minimal length representative of $u$ in $W/W_{I\cap J}$. For any simple reflection $s\in I\cap J$, the minimal length representative of $us$ in $W/W_{I\cap J}$ is still $u'$, hence the parabolic decomposition
of $us$ is $us=u'(u''s)$. Since $s$ is a right descent of $u$ by Lemma~\ref{u:right descent}, $s$ is a right descent of $u''$. Therefore $u''$ is the longest element in $W_{I\cap J}$. The rest follows from this.
\end{proof}

The above lemma tells us that for each rationally smooth $w \in W$, we can decompose $w$ or $w^{-1}$ to $u' u_{I \cap J} v$ where $uv$ is the BP-decomposition with respect to $J$, with $u = u'u_{I \cap J}$ and $u_{I \cap J}$ is the longest element of $W_{I \cap J}$. Given such decomposition, we decompose $\A_w$ into $\A_0 \defeq \A_{u_{I \cap J}}, \A_1 \defeq \A_u \setminus \A_0$ and $\A_2 \defeq \A_w \setminus \A_u$.

\begin{proposition}
\label{prop:chamred}
Let $r$ be some chamber inside $\A_1 \sqcup \A_0$. Let $r'$ be the chamber of $\A_0$ that contains $r$. Then the poset $Q_{\A_w, \A_1 \sqcup \A_0, r}$ is isomorphic to $Q_{\A_0 \sqcup \A_2, \A_0, r'}$.
\end{proposition}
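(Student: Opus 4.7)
The plan is to define a map $\phi$ from chambers of $\A_w$ inside $r$ to chambers of $\A_0 \sqcup \A_2$ inside $r'$ by sending a chamber $c$ to the unique chamber of the coarser arrangement $\A_0 \sqcup \A_2 \subset \A_w$ that contains it. Since $c \subset r \subset r'$, this image necessarily lies in $r'$, so $\phi$ is well-defined. Injectivity is immediate: if two chambers $c_1, c_2 \subset r$ share the same image, then they agree on every hyperplane in $\A_0 \cup \A_2$, and since both lie in the single chamber $r$ of $\A_1 \sqcup \A_0$ they also agree on every hyperplane in $\A_1$. Together these force $c_1 = c_2$.

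Surjectivity is the key technical step. Given a chamber $c'$ of $\A_0 \sqcup \A_2$ inside $r'$, I must show that $c' \cap r$ is non-empty; equivalently, that the sign pattern of $r$ on $\A_1$ and the sign pattern of $c'$ on $\A_2$ are jointly realizable (the shared $\A_0$-signs from $r'$ are already compatible). My plan is to invoke Lemma~\ref{u:right reflection}, which gives the length-additive decomposition $u = u' u_{I \cap J}$ with $u_{I \cap J}$ the longest element of $W_{I \cap J}$. This structural input controls how $\A_1 = \A_u \setminus \A_0$ decouples from $\A_2 = \A_w \setminus \A_u$ relative to $\A_0$: within the chamber $r'$ of $\A_0$, I expect the hyperplanes of $\A_1$ crossing $r'$ and those of $\A_2$ crossing $r'$ to partition $r'$ in a product-like manner, so that any realizable $\A_1$-sign pattern combined with any realizable $\A_2$-sign pattern is jointly realizable in $r'$. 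Once this decoupling is secured, $c' \cap r$ is a non-empty convex region and hence a single chamber of $\A_w$.

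Finally, I would check that $\phi$ preserves the poset structure. Two chambers of $\A_w$ inside $r$ are adjacent in $Q_{\A_w}$ if and only if they differ by crossing a single hyperplane, and because $r$ is a chamber of $\A_1 \sqcup \A_0$ that hyperplane must lie in $\A_2$. Under $\phi$ the two chambers map to chambers of $\A_0 \sqcup \A_2$ that are separated by the same hyperplane in $\A_2$, so adjacency is preserved in both directions. Distance from the respective fundamental chambers is measured by the number of separating hyperplanes, and since only $\A_2$-crossings are involved for movements inside $r$ (and inside $r'$ for the target poset), the covering direction is preserved as well.

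The main obstacle is the decoupling step in the surjectivity argument; once that geometric fact is established via Lemma~\ref{u:right reflection}, the rest of the isomorphism reduces to routine sign-pattern bookkeeping. I would try to verify decoupling either by constructing a concrete point of $c' \cap r$, by working with root-system data for the $\A_1$ and $\A_2$ roots restricted to $r'$, or by a commutation argument exploiting that the reflections generating $\A_1$ come from $u'$ acting outside $W_{I \cap J}$ while those generating $\A_2$ come from $v \in {}^J W$.
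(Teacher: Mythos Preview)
Your overall architecture matches the paper: define $\phi$ by containment, prove it is a bijection, and then check that adjacency is preserved. Your injectivity argument and your adjacency argument are essentially the ones the paper gives. The gap is exactly where you say it is: surjectivity, i.e.\ the claim that every chamber of $\A_0\sqcup\A_2$ inside $r'$ meets every chamber of $\A_1\sqcup\A_0$ inside $r'$. You identify this as the obstacle but do not prove it; the three approaches you list (build a point, analyze roots restricted to $r'$, a commutation argument) remain plans, and invoking Lemma~\ref{u:right reflection} again gives nothing new since that lemma was already used to set up the decomposition $\A_0,\A_1,\A_2$.

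The paper resolves this step with a monotonicity trick you are missing. The intersection claim only becomes \emph{harder} if you enlarge the arrangements, since chambers shrink. So one may freely add hyperplanes and assume $u$ is the longest element of $W_J$ and $v$ the longest element of ${}^JW$; then $\A_0\sqcup\A_1$ is the Coxeter arrangement of $W_J$, and $\A_0\sqcup\A_1\sqcup\A_2$ is the Coxeter arrangement of $W$. Now chambers are literally indexed by group elements. Fix the chamber $r_x$ of $\A_0$ labeled by $x\in W_{I\cap J}$. Chambers of $\A_1\sqcup\A_0$ in $r_x$ are labeled $yx$ with $y\in W_J^{I\cap J}$, and chambers of $\A_0\sqcup\A_2$ in $r_x$ are labeled $xz$ with $z\in{}^JW$. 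Their intersection contains the chamber of the full Coxeter arrangement labeled $yxz\in W$, hence is nonempty. That is the entire content of the decoupling you were looking for: it comes from the triple parabolic factorization $W_J^{I\cap J}\cdot W_{I\cap J}\cdot{}^JW$, made visible only after passing to the longest elements.
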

\begin{proof}

Once a chamber $r'$ of $\A_0$ is fixed, we will show that any chamber of $\A_0 \sqcup \A_2$ contained in $r'$ interesects every chamber of $\A_1 \sqcup \A_0$ contained in $r'$. In order to show this we can freely add more hyperplanes to $\A_0, \A_1$ and $\A_2$. So we may assume that $u = u' u_{I \cap J}$ is the longest element of $W_J$ and $v$ is the longest element of $^JW$. 

From Lemma~\ref{lem:maxsame}, each chamber of $\A_0$ is now indexed with a permutation of $W_{I \cap J}$. Fix a chamber $r_x$ labeled with a permutation $x \in W_{I \cap J}$. Each chamber of $\A_0 \sqcup \A_2$ contained in $r_x$ is labeled with a permutation $xz$ where $z \in ^JW$. Each chamber of $\A_1 \sqcup \A_0$ contained in $r_x$ is labeled with a permutation $yx$ where $y \in W_J^{I \cap J}$. For any such chamber of $\A_0 \sqcup \A_2$ and $\A_1 \sqcup \A_0$, their intersection will be the chamber of $\A$ that is labeled by $yxz$, a permutation of $W$. 

Let $r_1$ and $r_2$ be two different chambers of $\A$ contained in $r$. They are separated by a hyperplane in $\A_2$. For $i=1,2$, let $r_i'$ be the chamber of $\A_0 \sqcup \A_2$ that contains $r_i$. Then $r_1'$ and $r_2'$ are different chambers since they are separated by the hyperplane that separates $r_1$ and $r_2$. If $r_1$ and $r_2$ are adjacent, then $r_1'$ and $r_2'$ are adjacent. If $r_1'$ and $r_2'$ are adjacent but $r_1$ and $r_2$ are not, that means there is a hyperplane of $\A_1$ that separates $r_1$ and $r_2$. But that contradicts the fact that $r_1$ and $r_2$ are both contained in the same chamber of $\A_1 \sqcup \A_0$. So we may conclude that $r_1$ and $r_2$ are adjacent if and only if $r_1'$ and $r_2'$ are. 

\end{proof}

From the above property we immediately get the following tool:
\begin{corollary}
\label{cor:main}
In the above decomposition, $\A_{u_{I \cap J}v}$ being uniform with respect to $\A_{u_{I \cap J}}$, on top of $R_{u_{I \cap J}v}(q) = P_{u_{I \cap J}v}(q)$ and $R_u(q) = P_u(q)$ implies $R_w(q) = P_w(q)$.
\end{corollary}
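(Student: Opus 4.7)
The plan is to factor both $P_w(q)$ and $R_w(q)$ as products of two matching polynomials. Let $F(q)$ denote the rank generating function of $Q_{\A_{u_{I \cap J}v}, \A_0}$; the uniformity hypothesis ensures this is well defined, independent of the chamber of $\A_0$ chosen. Since $\A_{u_{I \cap J}v}$ decomposes as $\A_0$ together with its remaining hyperplanes (which coincide with $\A_2$), the uniformity immediately yields $R_{u_{I \cap J}v}(q) = R_{u_{I \cap J}}(q)\cdot F(q)$. Applying Proposition~\ref{prop:chamred}, for each chamber $r$ of $\A_u = \A_1 \sqcup \A_0$ lying inside a chamber $r'$ of $\A_0$, the induced subposet $Q_{\A_w, \A_u, r}$ is isomorphic to $Q_{\A_0 \sqcup \A_2, \A_0, r'} = Q_{\A_{u_{I \cap J}v}, \A_0, r'}$, and hence has rank generating function $F(q)$ regardless of $r$. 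Thus $\A_w$ is uniform with respect to $\A_u$, giving $R_w(q) = R_u(q)\cdot F(q)$.

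Next, the plan is to identify $F(q)$ on the Poincar\'e side. Lemma~\ref{lem:maxsame} gives $R_{u_{I \cap J}}(q) = P_{u_{I \cap J}}(q)$ since $u_{I \cap J}$ is the longest element of $W_{I \cap J}$; combined with the hypotheses $R_u(q) = P_u(q)$ and $R_{u_{I \cap J}v}(q) = P_{u_{I \cap J}v}(q)$, this yields $F(q) = P_{u_{I \cap J}v}(q)/P_{u_{I \cap J}}(q)$. To evaluate this quotient, apply Theorem~\ref{thm:Pfactor} to the factorization $u_{I \cap J}\cdot v$ of $u_{I \cap J}v$ relative to $J$: we must check that $u_{I \cap J} = m(u_{I \cap J}v, J)$, which follows from the Subword Property, since any element of $W_J$ below $u_{I \cap J}v$ uses only simple reflections from $I \cap J$ and therefore sits below the longest element $u_{I \cap J}$ of $W_{I \cap J}$. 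This identifies $F(q) = P_v^{^JW}(q)$. Finally, applying Theorem~\ref{thm:Pfactor} to the BP-decomposition $w = uv$ itself produces $P_w(q) = P_u(q)\cdot P_v^{^JW}(q) = R_u(q)\cdot F(q) = R_w(q)$, as desired.

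The main obstacle is the auxiliary application of Theorem~\ref{thm:Pfactor} to the element $u_{I \cap J}v$, which hinges on the short Subword-Property check above. A minor bookkeeping point underlying the whole argument is the identification $\A_0 \sqcup \A_2 = \A_{u_{I \cap J}v}$ of hyperplane sets; this follows from the reduced factorization $w = u'\cdot(u_{I \cap J}v)$ together with the standard formula $\Delta_{xy} = \Delta_y \sqcup y^{-1}(\Delta_x)$ for reduced products, which brings $\Delta_w \setminus \Delta_u$ and $\Delta_{u_{I \cap J}v}\setminus \Delta_{u_{I \cap J}}$ into the same form.
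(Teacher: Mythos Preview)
Your argument is correct and follows essentially the same route as the paper: use Proposition~\ref{prop:chamred} together with the uniformity hypothesis to obtain $R_w(q)/R_u(q) = R_{u_{I\cap J}v}(q)/R_{u_{I\cap J}}(q)$, then invoke Lemma~\ref{lem:maxsame} and the two hypotheses to convert the $R$'s to $P$'s. You are more explicit than the paper in justifying the identification $\A_0\sqcup\A_2=\A_{u_{I\cap J}v}$ and in spelling out the two applications of Theorem~\ref{thm:Pfactor} (including the check $u_{I\cap J}=m(u_{I\cap J}v,J)$), but these are exactly the details the paper's short proof leaves implicit.
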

\begin{proof}

If we know that $\A_{u_{I \cap J}v}$ is uniform with respect to $\A_{u_{I \cap J}}$, then Proposition~\ref{prop:chamred} tells us that $\A_w$ is uniform with respect to $\A_u$. Hence $R_w(q)$ is divisible by $R_u(q)$. Moreover, $\frac{R_w(q)}{R_u(q)} = \frac{R_{u_{I \cap J}v}(q)}{R_{u_{I \cap J}}(q)}$. From Lemma~\ref{lem:maxsame} we have $R_{u_{I \cap J}}(q) = P_{u_{I \cap J}}(q)$. Hence $R_{u_{I \cap J}v}(q) = P_{u_{I \cap J}v}(q)$ and $R_u(q) = P_u(q)$ would imply $R_w(q) = P_w(q)$.
\end{proof}

The above corollary allows us to only consider the case when $u$ is the longest element of some $W_I$.

\begin{proposition}
\label{prop:main}
In the above decomposition, if $v$ is a locally longest element or is in a local chain, then $P_u(q)=R_u(q)$ implies that $P_w(q) = R_w(q)$.
\end{proposition}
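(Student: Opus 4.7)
The plan is to invoke Corollary~\ref{cor:main}, which, given the hypothesis $P_u(q)=R_u(q)$, reduces the task to establishing two things: (a) the arrangement $\A_{u_{I\cap J}v}$ is uniform with respect to $\A_{u_{I\cap J}}$, and (b) $R_{u_{I\cap J}v}(q)=P_{u_{I\cap J}v}(q)$. A key observation is that every hyperplane occurring in $\A_{u_{I\cap J}v}$ comes from a root in the sub-root-system spanned by $I$, so both (a) and (b) are assertions taking place entirely inside the Weyl group $W_I$ associated to $I$.

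Suppose first that $v$ is a locally-longest element, so that $v$ is the longest element of $W_I^{I\cap J}$. Since $u_{I\cap J}$ is by construction the longest element of $W_{I\cap J}$, the product $u_{I\cap J}v$ is the longest element of $W_I$ by uniqueness of the parabolic decomposition of the top element. Thus $\A_{u_{I\cap J}v}$ is the full Coxeter arrangement of $W_I$, while $\A_{u_{I\cap J}}$ is the Coxeter arrangement of its parabolic subgroup $W_{I\cap J}$. Applying Lemma~\ref{lem:maxsame} inside $W_I$ produces (a) and (b) immediately.

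Next suppose $v$ is in a local chain, meaning that $W_I^{I\cap J}$ is a chain poset. Theorem~\ref{thm:Pfactor} applied inside $W_I$ gives
$$P_{u_{I\cap J}v}(q)=P_{u_{I\cap J}}(q)\,(1+q+\cdots+q^{\ell(v)}),$$
so establishing (b) amounts to showing that each chamber of $\A_{u_{I\cap J}}$ is subdivided into exactly $\ell(v)+1$ chambers of $\A_{u_{I\cap J}v}$, organized as a chain under the distance-from-$r_0$ order; this simultaneously gives (a). Combinatorially, the $\ell(v)$ hyperplanes of $\A_{u_{I\cap J}v}\setminus\A_{u_{I\cap J}}$ should stack as a nested chain inside every fundamental chamber of $W_{I\cap J}$. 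Because Theorem~\ref{thm:quotpalin} restricts local-chain possibilities to a short list (the $B_n/B_{n-1}$ quotient together with the small exceptional cases in $F_4$ and $G_2$), this can be verified directly in each type by reading off a reduced expression for $v$ as a tail of the longest element of $W_I^{I\cap J}$ and tracking the corresponding new inversion roots.

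The main obstacle is precisely the uniform-slicing assertion in the local-chain case: while the polynomial identity (b) falls out of the parabolic factorization as soon as one knows that each chamber is cut into the same number of pieces, showing that the induced subposet $Q_{\A_{u_{I\cap J}v},\,\A_{u_{I\cap J}},\,r}$ is a chain of length $\ell(v)$ independent of $r$ requires either a uniform argument exploiting the chain structure of $W_I^{I\cap J}$, or the case-by-case check alluded to above. Once this is in hand, the conclusion follows by feeding everything into Corollary~\ref{cor:main}.
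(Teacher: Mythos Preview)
Your reduction via Corollary~\ref{cor:main} and your treatment of the locally-longest case match the paper exactly.

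For the local-chain case, however, you stop short of an actual argument: you correctly isolate the obstacle (showing that each chamber of $\A_{u_{I\cap J}}$ is sliced into a chain of length $\ell(v)$, independently of the chamber) but then defer to a case-by-case verification. The paper supplies a uniform argument here that you are missing, and it is short enough that the case check is unnecessary. The trick is to first pass to the \emph{longest} element $v'$ of $^{I\cap J}W_I$, so that $w'\defeq u_{I\cap J}v'$ is the longest element of $W_I$. Then Lemma~\ref{lem:maxsame} applied inside $W_I$ says that $\A_{w'}$ is uniform with respect to $\A_{u_{I\cap J}}$, and moreover each induced subposet $Q_{\A_{w'},\A_{u_{I\cap J}},r}$ is the weak order on $W_I^{I\cap J}$. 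Since $W_I^{I\cap J}$ is a chain by hypothesis, this induced subposet is a chain of length $\ell(v')$; in particular \emph{every} hyperplane of $\A_{w'}\setminus\A_{u_{I\cap J}}$ crosses the interior of \emph{every} chamber of $\A_{u_{I\cap J}}$. Passing from $\A_{w'}$ down to $\A_{u_{I\cap J}v}$ simply removes some of these hyperplanes, and removing hyperplanes from a family that all cross a given chamber leaves that chamber cut into a (shorter) chain of length $\ell(v)$, the same length for every chamber. Uniformity and the factorization $R_{u_{I\cap J}v}(q)=R_{u_{I\cap J}}(q)(1+q+\cdots+q^{\ell(v)})$ then drop out at once, with no type-by-type work.

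A small side remark: your list of local-chain situations (``$B_n/B_{n-1}$ together with $F_4$ and $G_2$'') is not quite the relevant taxonomy. The embedded quotient $W_I^{I\cap J}$ can be a chain in many situations (e.g.\ any $A_k/A_{k-1}$), and conversely the special $F_4$ and $B_n/A_{n-1}$ elements in Theorem~\ref{thm:quotpalin} are \emph{not} in local chains; they are handled separately in the paper via Example~\ref{ex:F4special} and Lemma~\ref{lem:Bnspecial}. With the uniform argument above, no enumeration of local-chain types is needed for the present proposition.
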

\begin{proof}

From the previous Corollary, it is enough to show that $\A_{u_{I \cap J}v}$ is uniform with respect to $\A_{u_{I \cap J}}$ and that $R_{u_{I \cap J}v}(q) = P_{u_{I \cap J}v}(q)$.

If $v$ is the longest element of $^JW$, then $u_{I \cap J}v$ is the longest element of $W_I$. The claim follows from Lemma~\ref{lem:maxsame}.

When $W_{I}^{I \cap J}$ is a chain, let $v'$ denote the longest element of $^{I \cap J}W_{I}$. Then $w' \defeq u_{I \cap J}v'$ is the longest element of $W_I$. From Lemma~\ref{lem:maxsame}, we know that $R_{u_{I \cap J}}(q) = P_{u_{I \cap J}}(q)$ and $R_{u_{I \cap J}v'}(q) = P_{u_{I \cap J}v'}(q)$. For each chamber $u$ of $\A_{u_{I \cap J}}$, the poset $Q(\A_{w'},\A_{u_{I \cap J}},u)$ is a chain of length $\ell(v')$. In particular, every hyperplane of $\A_{w'} \setminus \A_{u_{I \cap J}}$ intersects the interior of the chamber $u$.

When we go from $\A_{w'=u_{I \cap J}v'}$ to $\A_{u_{I \cap J}v}$, we are removing some hyperplanes of $\A_{w'} \setminus \A_{u_{I \cap J}}$. For each chamber $u$ of $\A_{u_{I \cap J}}$, the poset $Q(\A_{u_{I \cap J}},\A_{u_{I \cap J}},u)$ is a chain of length $\ell(v')$ minus the number of hyperplanes removed. Hence $\A_{u_{I \cap J}v}$ is uniform with respect to $\A_{u_{I \cap J}}$. Moreover, we have $R_{u_{I \cap J}v}(q) = R_{u_{I \cap J}(q)}(1+\cdots+q^{\ell(v)})$. The claim now follows from Lemma~\ref{lem:maxsame}.
\end{proof} 

Lastly we analyze two special examples each coming from $F_4$ and $B_n$ which will be needed in the next section.

\begin{example}
\label{ex:F4special}
Let $w \in F_4$ be $w=uv$ where $u$ is the longest element of $W_{\{1,2,3\}}$ and $v = s_4s_3s_2s_1$. Then $P_w(q) = P_{u}(q) (1+q+q^2+q^3)$. The hyperplane arrangement $\A_w$ is taking the hyperplanes $x_1=0,x_2=0,x_3=0,x_2-x_1=0,x_3-x_2=0,x_3-x_1=0,x_1+x_2=0,x_1+x_3=0,x_2+x_3=0$ coming from $\A_{u}$ and additionally taking the hyperplanes $-x_1-x_2-x_3+x_4=0, -x_1-x_2+x_3-x_4=0, -x_1+x_2-x_3-x_4=0, x_1-x_2-x_3-x_4=0$. 

Pick any chamber $c$ of $\A_u$ and pick an arbitrary interior point $x$ inside. Chamber $c$ determines a total order on $x_1,\ldots,x_{n-1},-x_1,\ldots,-x_{n-1}$ that does not depend on the choice of $x$. Consider the line $l_x$ obtained from $x$ by changing the $x_n$ value from $-\infty$ to $+\infty$. This line is still contained in chamber $c$. Imagine moving through the line $l_x$ by changing the $x_n$ value from $-\infty$ to $+\infty$. The order we cross the hyperplanes of $\A_w \setminus \A_{u}$ is determined by the total order on $x_1,\ldots,x_{n-1},-x_1,\ldots,-x_{n-1}$. 

Hence $\A_w$ is uniform with respect to $\A_u$. Moreover, the poset $Q_w$ is obtained from $Q_u$ by doing a product with a chain of length $4$. We get $R_w(q) = R_{u}(q) (1+ \cdots + q^4)$. Since $R_{u}(q) = P_{u}(q)$ from Lemma~\ref{lem:maxsame} and $P_v^{^JW}(q) = (1+ \cdots + q^{\ell(v)})$, we obtain the desired result.

\end{example}

\begin{lemma}
\label{lem:Bnspecial}
Let $W$ be a type $B_n$ Weyl group and $S$ be the set of simple roots, corresponding to the Dynkin diagram of Figure~\ref{dia:Bn}. Set $J = S \setminus \{s_0\}$. Pick $w = u v$ where $u$ is the longest element of $W_J$ and $v = s_0 s_1 \cdots s_{n-1}$. Then $\A_w$ is unifrom with respect to $\A_u$ and $P_w(q) = R_w(q)$.
\end{lemma}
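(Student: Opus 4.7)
My plan mirrors Example~\ref{ex:F4special}: identify the hyperplanes of $\A_w\setminus\A_u$, show that each chamber $c$ of $\A_u$ is cut into exactly $n+1$ subchambers forming a chain in $Q_{\A_w}$, and then read off the distances from $r_0$.

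Set up $B_n$ in $\mathbb{R}^n$ with $\alpha_0 = e_1$ and $\alpha_i = e_{i+1} - e_i$ for $1 \le i \le n-1$, so that $\A_u$ is the type $A_{n-1}$ braid arrangement $\{x_i - x_j = 0 : i < j\}$. A direct computation from the reduced expression gives $\Delta_v = \{e_n\} \cup \{e_n - e_i : 1 \le i \le n-1\}$, and for $i < j \le n$ one checks that $v^{-1}(e_j - e_i) = e_{j-1} - e_{i-1}$ when $i \ge 2$ and $v^{-1}(e_j - e_1) = e_{j-1} + e_n$. Combining via the standard identity $\Delta_w = \Delta_v \sqcup v^{-1}(\Delta_u)$, which is valid because $\ell(uv) = \ell(u) + \ell(v)$, identifies $\A_w \setminus \A_u$ as the $n$ hyperplanes $\{x_n = 0\} \cup \{x_n + x_i = 0 : 1 \le i \le n-1\}$.

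Fix a chamber $c_\tau = \{x_{\tau(1)} < \cdots < x_{\tau(n)}\}$ of $\A_u$ and set $k := \tau^{-1}(n)$. I parametrize the subchambers of $c_\tau$ in $\A_w$ by $p \in \{0,1,\ldots,n\}$, the position of $y := -x_n$ among $x_{\tau(1)} < \cdots < x_{\tau(n)}$ (with $x_{\tau(0)} := -\infty$ and $x_{\tau(n+1)} := +\infty$). The identity $y = -x_{\tau(k)}$ pins $x_n > 0$ when $p < k$ and $x_n < 0$ when $p \ge k$. All $n+1$ positions are realizable within $c_\tau$, and incrementing $p$ from $j-1$ to $j$ crosses exactly one new hyperplane, namely $x_n + x_{\tau(j)} = 0$, which collapses to $x_n = 0$ when $j = k$. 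Hence $c_\tau$ decomposes into a chain of $n+1$ subchambers. A short sign check shows that $x_n + x_i < 0$ iff $\tau^{-1}(i) \le p$ (for $i \ne n$) and $x_n < 0$ iff $p \ge k$, so the number of new hyperplanes separating $(c_\tau, p)$ from $r_0$ equals $p$ in both regimes: for $p < k$, $p$ flips among $\{x_n + x_i\}$ and none at $x_n$; for $p \ge k$, $p-1$ flips among $\{x_n + x_i : i \ne n\}$ together with the flip at $x_n$.

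Combined with the $\inv(\tau)$ separating hyperplanes from $\A_u$, this gives $d(r_0, (c_\tau, p)) = \inv(\tau) + p$. Uniformity of $\A_w$ over $\A_u$ is now immediate with $Q_{\A_w, \A_u}$ a chain of length $n$, and $R_w(q) = R_u(q)(1 + q + \cdots + q^n)$. Lemma~\ref{lem:maxsame} applied to $W_J$ gives $R_u(q) = P_u(q)$, and Theorem~\ref{thm:Pfactor} gives $P_w(q) = P_u(q) \cdot P_v^{^JW}(q)$, so it remains to identify $P_v^{^JW}(q) = 1 + q + \cdots + q^n$. No braid move applies to $s_0 s_1 \cdots s_{n-1}$, so $v$ has a unique reduced expression, and among its reduced subwords the only ones in $^JW$ are the prefixes $\id,\, s_0,\, s_0 s_1,\, \ldots,\, s_0 s_1 \cdots s_{n-1}$; any other subword contains some $s_j$ (with $j \ge 1$) non-adjacent in the Dynkin diagram to every earlier letter and can therefore be commuted to the front, exposing a left descent $s_j \in J$. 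The main obstacle is the sign count: the flip in the sign of $x_n$ at $p = k-1 \to p = k$ must be absorbed exactly by the new hyperplane $x_n = 0$, and this clean cancellation is what makes $d = \inv(\tau) + p$ come out uniformly.
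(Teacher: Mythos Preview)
Your argument is correct and follows the same plan as the paper: compute the $n$ new hyperplanes $x_n = 0$ and $x_n + x_i = 0$ ($1\le i\le n-1$), show that each chamber of $\A_u$ is sliced into a chain of $n+1$ subchambers with $d(r_0,(c_\tau,p)) = \inv(\tau)+p$, and conclude $R_w(q) = R_u(q)(1+q+\cdots+q^n) = P_u(q)\,P_v^{^JW}(q) = P_w(q)$ via Lemma~\ref{lem:maxsame} and Theorem~\ref{thm:Pfactor}.

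One substantive difference worth noting: you identify $\A_u$ as the type $A_{n-1}$ braid arrangement $\{x_i = x_j : 1 \le i < j \le n\}$, which is the correct reading of $\Delta_u$ since $W_J = \langle s_1,\dots,s_{n-1}\rangle \cong S_n$ acts on all $n$ coordinates; the paper's proof instead writes down the $B_{n-1}$ arrangement in $x_1,\dots,x_{n-1}$, which appears to be a slip. Because the paper's (mis)stated $\A_u$ does not involve $x_n$, its ``sweep $x_n$ from $-\infty$ to $+\infty$'' argument is immediate there, but in the correct $\A_u$ the coordinate $x_n = x_{\tau(k)}$ is pinned by the chamber. Your parametrization by the position $p$ of $-x_n$ among $x_{\tau(1)} < \cdots < x_{\tau(n)}$, together with the check that the flip at $x_n=0$ is absorbed exactly when $p$ passes $k=\tau^{-1}(n)$, is precisely the right adaptation, and your verification that all $n+1$ positions are realized and that $P_v^{^JW}(q)=1+q+\cdots+q^n$ (via the subword/commutation argument, in agreement with Proposition~\ref{cor:bnpalin}) completes the proof cleanly.
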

\begin{proof}
The hyperplane arrangement $\A_u$ consists of hyperplanes $x_i = 0$ for $1 \leq i \leq n-1$, hyperplanes $x_i - x_j = 0$ for $1 \leq i < j \leq n-1$ and hyperplanes $x_i + x_j = 0$ for $1 \leq i < j \leq n-1$. The hyperplane arrangement $\A_w$ is obtained from $\A_u$ by adding in the hyperplane $x_n=0$ and hyperplanes $x_n + x_i = 0$ for $1 \leq i \leq n-1$. 

Pick any chamber $c$ of $\A_u$ and pick an arbitrary interior point $x$ inside. Chamber $c$ determines a total order on $x_1,\ldots,x_{n-1}$ and $0$ that does not depend on the choice of $x$. Consider the line $l_x$ obtained from $x$ by changing the $x_n$ value from $-\infty$ to $+\infty$. This line is still contained in chamber $c$. Imagine moving through the line $l_x$ by changing the $x_n$ value from $-\infty$ to $+\infty$. The order we cross the hyperplanes of $\A_w \setminus \A_u$ is determined by the total order on $x_1,\ldots,x_{n-1}$ and $0$.

Hence $\A_w$ is uniform with respect to $\A_u$. Moreover, the poset $Q_w$ is obtained from $Q_u$ by doing a product with a chain of length $n$. We get $R_w(q) = R_u(q) (1+ \cdots + q^{\ell(v)})$. Since $R_u(q) = P_u(q)$ from Lemma~\ref{lem:maxsame} and $P_v^{^JW}(q) = (1+ \cdots + q^{\ell(v)})$, we obtain the desired result.



\end{proof}

\section{Conclusion and further remarks}

In this section, we finally prove that $P_w$ and $R_w$ are the same when $w$ is palindromic.

\begin{theorem}
Let $W$ be a Weyl group. Let $w$ be a rationally smooth element. Then $R_w(q) = P_w(q)$. 
\end{theorem}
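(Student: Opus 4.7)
The plan is to induct on the lexicographic pair $(\mathrm{rank}(W),\ell(w))$. The base case $w=\id$ is trivial since $P_w=R_w=1$. For the inductive step, Theorem~\ref{thm:BP} produces a leaf $\alpha$ and a BP-decomposition of $w$ or $w^{-1}$ with respect to $J=S\setminus\{\alpha\}$; by the duality of $P_w$ and $R_w$ we may assume $w=uv$ itself admits such a decomposition, with $u=m(w,J)\in W_J$ and $v\in{}^JW$. If $v=\id$, then $w\in W_J$, a Weyl group of strictly smaller rank, and because $\Delta_w\subseteq\Delta_J$ the arrangement $\A_w$ and the polynomial $R_w(q)$ are unchanged whether viewed inside $W$ or $W_J$; the rank-induction then yields $P_w=R_w$.

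Assume now $v\neq\id$, so $\ell(u)<\ell(w)$. Theorem~\ref{thm:Pfactor} factors $P_w(q)=P_u(q)\,P_v^{{}^JW}(q)$, with both factors monic, of constant term $1$, and with nonnegative integer coefficients. Palindromicity of $P_w$ (Theorem~\ref{th:Peterson_Criterion}) therefore transfers to both factors by the standard argument that the reciprocal polynomial is the unique factor-matched rearrangement. Consequently $u$ is rationally smooth, so the length-induction gives $P_u=R_u$, and $v$ is a palindromic element of ${}^JW$.

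Theorem~\ref{thm:quotpalin} now places $v$ into one of four classes. If $v$ is locally-longest or lies in a local chain, Proposition~\ref{prop:main} immediately yields $P_w=R_w$. If $v$ is one of the two $F_4$ exceptions, then $I=S$ and $I\cap J=J$; Example~\ref{ex:F4special} (applied directly to the first subcase and via the diagram involution of $F_4$ to the second) establishes that $\A_{u_Jv}$ is uniform with respect to $\A_{u_J}$ and that $R_{u_Jv}(q)=P_{u_Jv}(q)$, so Corollary~\ref{cor:main} closes the case. Finally, for the $B_n$ exception $v=s_0s_1\cdots s_k$, set $I\defeq\{s_0,\dots,s_k\}$ (so that $W_I$ is of type $B_{k+1}$) and apply Lemma~\ref{lem:Bnspecial} inside $W_I$ to the element $u_{I\cap J}v$; the lemma supplies the same two conditions, and Corollary~\ref{cor:main} concludes once more.

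The main obstacle is the $B_n$ branch when $k<n-1$: one must observe that Lemma~\ref{lem:Bnspecial}, though phrased for an ambient type-$B$ Weyl group of rank equal to that of $v$'s support, applies inside the parabolic subgroup $W_I$ of type $B_{k+1}$, and that both uniformity and the identity $R=P$ for $u_{I\cap J}v$ are intrinsic combinatorial features of the hyperplane arrangement that survive the passage from $W_I$ to the ambient $W$. Once this observation is in place, Corollary~\ref{cor:main} is the only glue required and the induction proceeds routinely.
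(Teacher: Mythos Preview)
Your proof follows the paper's approach almost verbatim: induct (the paper on rank alone, you on the pair $(\mathrm{rank},\ell)$, which is harmless but unnecessary since $u\in W_J$ already drops rank), invoke a BP-decomposition, classify $v$ via Theorem~\ref{thm:quotpalin}, and close each branch with Proposition~\ref{prop:main}, Example~\ref{ex:F4special}, or Lemma~\ref{lem:Bnspecial} together with Corollary~\ref{cor:main}. Your handling of the $B_n$ exception for $k<n-1$ by passing to the parabolic $W_I$ of type $B_{k+1}$ is exactly the detail the paper glosses over, and your remark that uniformity and the equality $R=P$ for $u_{I\cap J}v$ are intrinsic to the arrangement is the right observation.

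There is, however, one genuine gap. You assert that palindromicity of $P_w = P_u \cdot P_v^{{}^JW}$ ``transfers to both factors by the standard argument that the reciprocal polynomial is the unique factor-matched rearrangement.'' No such argument exists for polynomials, even monic ones with constant term $1$ and nonnegative integer coefficients: take $f(q)=1+2q+q^2+q^3$ and $g(q)=1+q+2q^2+q^3$; neither is palindromic, yet
\[
f(q)g(q)=1+3q+5q^2+7q^3+5q^4+3q^5+q^6
\]
is. The fact you need---that in a BP-decomposition of a rationally smooth $w$ both $P_u$ and $P_v^{{}^JW}$ are palindromic---is true, but it is a nontrivial result of Billey--Postnikov \cite{BP05} relying on the Carrell--Peterson reflection-count characterisation of rational smoothness, not a formal polynomial manipulation. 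The paper also invokes this implicitly (writing ``rationally smooth $u$'' and applying Theorem~\ref{thm:quotpalin} to $v$ without comment); you should cite \cite{BP05} for it rather than supply an incorrect justification.
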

\begin{proof}

We use induction on the rank of $W$. The claim is obvious for rank $1$. Decompose $w$ or $w^{-1}$ via BP-decomposition. By applying Theorem~\ref{thm:quotpalin}, we see that either $v$ is a locally-longest element or is in a local chain or is in special cases of $F_4$ or $B_n$. In the first two cases, that is when $v$ is a locally-longest element or is in a local chain, then Proposition~\ref{prop:main} allows us to replace $w$ with rationally smooth $u$ of strictly smaller rank. If we are in the special cases, using Example~\ref{ex:F4special} and Lemma~\ref{lem:Bnspecial} combined with Corollary~\ref{cor:main} allows us the same replacement.
\end{proof}

Since $R_w(q)$ is always palindromic by definition, we get the following result as a corollary:

\begin{corollary}
Let $W$ be a Weyl group. Then $w$ is rationally smooth if and only if $P_w(q) = R_w(q)$.
\end{corollary}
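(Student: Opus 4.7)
The corollary follows almost immediately from the theorem we just proved together with the Carrell--Peterson criterion (Theorem~\ref{th:Peterson_Criterion}), so my proof plan is very short.

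For the forward direction ($\Rightarrow$), I would simply invoke the preceding theorem: if $w$ is rationally smooth, then $R_w(q) = P_w(q)$. There is nothing further to do here.

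For the reverse direction ($\Leftarrow$), the key input is that $R_w(q)$ is \emph{always} palindromic, regardless of whether $w$ is rationally smooth. This is a structural fact about any central hyperplane arrangement, and was already noted in the introduction. I would justify it explicitly by pointing to the antipodal involution $r \mapsto -r$ on the regions of $\A_w$: every hyperplane in $\A_w$ either separates $r_0$ from $r$ or separates $r_0$ from $-r$, but not both, so
\[
d(r_0, r) + d(r_0, -r) \;=\; \#\A_w \;=\; \ell(w).
\]
Hence the involution $r \mapsto -r$ gives a bijection between regions at distance $k$ from $r_0$ and regions at distance $\ell(w) - k$, which is exactly the statement that $R_w(q) = q^{\ell(w)} R_w(q^{-1})$. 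Now if $P_w(q) = R_w(q)$, then $P_w(q)$ is palindromic, and Theorem~\ref{th:Peterson_Criterion} immediately yields that $w$ is rationally smooth.

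There is essentially no obstacle here; the only non-trivial ingredient is the preceding theorem, and the reverse direction is purely formal once one records the palindromicity of $R_w(q)$. Thus the proof occupies only a few lines, and the corollary should be presented as a direct packaging of the theorem above with the Carrell--Peterson criterion and the observation about the antipodal symmetry of the region poset of a central arrangement.
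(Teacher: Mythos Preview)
Your proposal is correct and follows exactly the same approach as the paper: the forward direction cites the preceding theorem, and the reverse direction uses that $R_w(q)$ is always palindromic (which the paper states in the introduction and again just before the corollary) together with the Carrell--Peterson criterion. Your explicit justification of the palindromicity of $R_w(q)$ via the antipodal involution $r\mapsto -r$ on a central arrangement is a welcome bit of detail that the paper leaves implicit.
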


We would like to mention that \cite{SLOF} has an explanation of the factors of $R_w(q)$ using the structure of the hyperplane arrangement.

The proof of the main theorem is based on a recurrence relation, using Theorem~\ref{thm:quotpalin}. Back in \cite{OY}, we conjectured that Corollary~\ref{cor:conj} would be true for any Coxeter groups. Although Theorem~\ref{thm:quotpalin} tells us this is true for Weyl groups (also answered in \cite{SLOF}), it turns out that there are palindromic lower intervals in a leaf quotient of $H_4$ Coxeter groups that does not satisfy the property.

\begin{figure}[h!]
\centering\begin{tikzpicture}
\draw[line width=0.7pt]
	(0,0)--(6,0);
\draw[fill=white, line width=1pt]
	(0,0) circle[radius=1.8mm]
	(2,0) circle[radius=1.8mm]
	(4,0) circle[radius=1.8mm]
	(6,0) circle[radius=1.8mm];
\node at (0,-.5) {$s_4$};
\node at (2,-.5) {$s_3$};
\node at (4,-.5) {$s_2$};
\node at (6,-.5) {$s_1$};
\node at (1,.5) {$5$};
\end{tikzpicture}
\caption{Type $H_4$ Coxeter system}\label{dia:H4}
\end{figure}
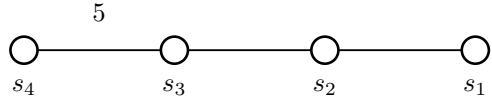

\begin{example}[$H_4$]
\label{ex:H4}
The following are the elements of $^{S \setminus \{s\}}W$ when $W$ is a Coxeter group of type $H_4$, $S$ the simple roots of $W$ and $s$ a leaf of the Coxeter diagram as in Figure~\ref{dia:H4}. We only list the nontrivial and non-chain palindromic elements. The result was obtained by using the Coxeter package developed by Stembridge as before. For this example, we use $s_i$ and $i$ interchangeably, so a list of numbers corresponds to a word of simple roots.

\begin{enumerate}
\item $W=H_4, s=1$
\begin{itemize}
\item All palindromic elements here are the longest element or a chain element.
\end{itemize}

\item $W=H_4, s=4$
\begin{itemize}
\item $[4,3,4,2,3,4,1,2,3,4,2,3,4,2,3,4]$
\item $[4,3,4,2,3,4,2,3,4,2,3,4]$
\item $[4,3,4,2,3,4,3]$
\end{itemize}

The element $[4,3,4,2,3,4,1,2,3,4,2,3,4,2,3,4]$ uses all simple roots, so is impossible to be a longest word of an embedded quotient.

\end{enumerate}

\end{example}

One nice property that $R_w(q)$ has is that it is always palindromic regardless of the rational smoothness of $w$. And this is a property that intersection homology Poncar\'e polynomial $IP_w(q)$ also has. So it would be interesting to compare these two polynomials \cite{KOO}.

\bibliographystyle{plain}    
\bibliography{bib}

\end{document}